\newtheorem{theorem}{Theorem}[section]
\newtheorem{lemma}[theorem]{Lemma}
\newtheorem{proposition}[theorem]{Proposition}
\newtheorem{corollary}[theorem]{Corollary}
\theoremstyle{definition}
\theoremstyle{definitions}
\newtheorem{remark}[theorem]{Remark}
\newtheorem{example}[theorem]{Example}
\theoremstyle{notations}
\theoremstyle{remarks}
\journal{Topology and its Applications}
\begin{document}

\begin{frontmatter}

%% Title, authors and addresses

%% use the tnoteref command within \title for footnotes;
%% use the tnotetext command for the associated footnote;
%% use the fnref command within \author or \address for footnotes;
%% use the fntext command for the associated footnote;
%% use the corref command within \author for corresponding author footnotes;
%% use the cortext command for the associated footnote;
%% use the ead command for the email address,
%% and the form \ead[url] for the home page:
%%
%% \title{Title\tnoteref{label1}}
%% \tnotetext[label1]{}
%% \author{Name\corref{cor1}\fnref{label2}}
%% \ead{email address}
%% \ead[url]{home page}
%% \fntext[label2]{}
%% \cortext[cor1]{}
%% \address{Address\fnref{label3}}
%% \fntext[label3]{}

\title{Topological Coarse Shape Homotopy Groups}

%% use optional labels to link authors explicitly to addresses:
 %\author[label1,label2]{<author name>}
% \address[label1]{<address>}
% \address[label2]{<address>}

\author[label1]{Fateme~Ghanei}
\ead{fatemeh.ghanei91@gmail.com}
\author[label1]{Hanieh~Mirebrahimi\corref{cor1}}
\ead{h\_mirebrahimi@um.ac.ir}
\author[label1]{Behrooz~Mashayekhy}
\ead{bmashf@um.ac.ir}
\author[label2]{Tayyebe~Nasri}
\ead{tnasri72@yahoo.com}

\address[label1]{Department of Pure Mathematics, Center of Excellence in Analysis on Algebraic Structures, Ferdowsi University of Mashhad,\\
P.O.Box 1159-91775, Mashhad, Iran.}
\address[label2]{Department of Pure Mathematics, Faculty of Basic Sciences,  University of Bojnord,\\
 Bojnord, Iran.}
\cortext[cor1]{Corresponding author}

\begin{abstract}
Cuchillo-Ibanez et al. introduced a  topology on the sets of  shape morphisms between arbitrary topological spaces in 1999.  In this paper, applying a similar idea, we introduce a topology on the set of coarse shape morphisms $Sh^*(X,Y)$, for arbitrary topological spaces $X$ and
$Y$. In particular,  we can consider  a topology on the coarse  shape homotopy group of a topological space $(X,x)$, $Sh^*((S^k,*),(X,x))=\check{\pi}_k^{*}(X,x)$, which makes it a Hausdorff topological group. Moreover, we study some properties of these topological  coarse shape homotopoy groups such as second countability, movability and in particullar, we prove that     $\check{\pi}_k^{*^{top}}$  preserves  finite product of compact Hausdorff spaces. Also, we show that for a pointed topological space $(X,x)$, $\check{\pi}_k^{top}(X,x)$ can be embedded in $\check{\pi}_k^{*^{top}}(X,x)$. 
%the $k$th topological coarse shape homotopy group $\check{\pi}_k^{*^{top}}$  can be considered as a quotient space.  %Moreover, we present some basic properties of topological shape homotopy groups.
\end{abstract}

\begin{keyword}
Topological coarse shape homotopy group\sep Coarse shape group\sep  Shape group\sep Topological group \sep Inverse limit.
%% keywords here, in the form: keyword \sep keyword
\MSC[2010]  55Q07\sep 55P55\sep 54C56\sep 54H11\sep 18A30
%% MSC codes here, in the form: \MSC code \sep code
%% or \MSC[2008] code \sep code (2000 is the default)

\end{keyword}

\end{frontmatter}

%\\\\\\\\\\\\\\\\\\\\\\\\\\\\\\\\\\\\\\\\\\\\\\\\\\\\\\\\\\\\\\\\\\\\\\\\\\\\\\\\\\\\\\\\\\\\\\\\\\\\\\\\\\\\\\\\\\\\\\\\\\\\\\\\\\\\\\\\\
%=========================================================================================================================================
%/////////////////////////////////////////////////////////////////////////////////////////////////////////////////////////////////////////
\section{Introduction and Motivation}
Suppose that  $(X,x)$ is a pointed topological space. We know that $\pi_{k}(X,x)$ has a quotient topology induced by the natural map $q:\Omega^k(X,x)\rightarrow \pi_k(X,x)$, where $\Omega^k(X,x)$ is the $k$th loop space of $(X,x)$ with the compact-open topology. With this topology, $\pi_{k}(X,x)$ is a quasitopological group, denoted by $\pi_k^{qtop}(X,x)$  and for some spaces  it becomes a topological group (see \cite{B0,Br,Bra,G1}).

Calcut and McCarthy \cite{CM1} proved that for a path connected and locally path connected space $X$,  $\pi_1^{qtop}(X)$ is a discrete topological group if and only if $X$ is semilocally 1-connected (see also \cite{Br}). Pakdaman et al. \cite{P1} showed that for a locally $(n-1)$-connected space $X$, $\pi_n^{qtop}(X,x)$ is discrete if and only if $X$ is semilocally n-connected at $x$ (see also \cite{G1}).
Fabel \cite{F2,F3} and Brazas \cite{Br} presented some spaces for which their quasitopological homotopy groups are not topological groups.
 Moreover, despite  Fabel's result \cite{F2} that says the quasitopological fundamental group of the Hawaiian earring is not a topological group, Ghane et al. \cite{G2} proved that the topological $n$th homotopy group of an $n$-Hawaiian like space is a prodiscrete metrizable topological group, for all $n\geq 2$.
 
Cuchillo-Ibanez et al. \cite{CM}  introduced a topology on the set of shape morphisms between arbitrary topological spaces $X$, $Y$, $Sh(X, Y)$.  Moszy\'{n}ska \cite{M} %Bilan \cite{Bi} 
showed that for a compact Hausdorff space $(X,x)$, the $k$th shape group $\check{\pi}_k(X,x)$, $k\in \Bbb{N}$, is isomorphic to the set $Sh((S^k, *),(X, x))$  and  Bilan \cite{Bi} mentioned that the result can be  extended  for all topological spaces. The authors \cite{NG}, considering the latter topology on the set of shape morphisms between pointed spaces, obtained a topology on the shape  homotopy groups of arbitrary spaces, denoted by $\check{\pi}_k^{top}(X,x)$ and   showed that with this topology, the $k$th shape group $\check{\pi}_k^{top}(X,x)$  is a Hausdorff topological group, for all $k\in \Bbb{N}$. Moreover, they obtained some topological properties of these groups  under some conditions such as movability,  $\mathbb{N}$-compactness and compactness. In particular, they proved that $\check{\pi}_k^{top}$ commutes with finite product of compact Hausdorff spaces. Also, they presented two spaces $X$ and $Y$ with the same shape homotopy groups such that their topological shape homotopy groups are not isomorphic. 

The aim of this paper is  to introduce a topology on the coarse shape homotopy groups $\check{\pi}_k^{*}(X,x)$ and  to provide  some topological properties of these groups. 
First, similar to the techniques in \cite{CM}, we introduce a 
 topology on the set of coarse shape morphisms $Sh^*(X, Y)$, for  arbitrary topological spaces $X$ and $Y$. Several properties of this topology such continuity of the map $\Omega:Sh^*(X,Y)\times Sh^*(Y,Z)\longrightarrow Sh^*(X,Z)$ given by the composition $\Omega(F^*,G^*)=G^*\circ F^*$ and the equality $Sh^*(X,Y)=\displaystyle{\lim_{\leftarrow} Sh^*(X,Y_{\mu})}$, for an HPol-expansion $\mathbf{q}:Y\rightarrow (Y_{\mu},q_{\mu\mu'},M)$ of $Y$, are proved which are usefull to hereinafter results. Moreover, we show that this topology can also be  induced from an ultrametric similar to the process in  \cite{CM2}.
 
 By the above topology, we can consider a topology on the coarse shape homotopy group $\check{\pi}_k^{*^{top}}(X,x)=Sh^*((S^k, *), (X, x))$ which makes it a  Hausdorff topological group, for all $k\in \Bbb{N}$ and any pointed topological  space $(X,x)$. It is  known that if $X$ and $Y$ are compact Hausdorff spaces, then $X\times Y$ is a  product in the coarse shape category \cite[Theorem 2.2]{Na}. In this case, we show that  the $k$th topological coarse shape group commutes with finite product, for all $k\in \mathbb{N}$. Also, we prove that movability of $\check{\pi}_k^{*^{top}}(X,x)$ can be concluded from the movability of $(X,x)$, for  topological space $(X,x)$ with some conditions. 
 As previously mentioned, $\check{\pi}_k(X,x)$  with the topology defined by Cuchillo-Ibanez et al. \cite{CM} on the set of shape morphisms, is a topological group. We show that this topology also coinsides with the topology induced by  $\check{\pi}_k^{*^{top}}(X,x)$ on the subspace  $\check{\pi}_k(X,x)$. %==================================================================================================================================================
%//////////////////////////////////////////////////////////////////////////////////////////////////////////////////////////////////////////////////
\section{Preliminaries}
Recall from \cite{UB} some of the main notions concerning the coarse shape category and  pro$^*$-category. Let $\mathcal{T}$ be a category and let $\mathbf{X}=(X_{\lambda},p_{\lambda\lambda'},\Lambda)$ and $\mathbf{Y}=(Y_{\mu},q_{\mu\mu'},M)$ be two inverse systems in the category $\mathcal{T}$. An {\it $S^*$-morphism} of inverse
systems, $(f,f^n_{\mu}): \mathbf{X} \rightarrow \mathbf{Y} $, consists of an index function $f : M \rightarrow\Lambda$ and of a set of $\mathcal{T}$-morphisms $f^n_{\mu}: X_{f(\mu)} \rightarrow Y_{\mu} $, $n\in \Bbb{N}$,  $\mu\in M$, such that for every related pair $\mu\leq \mu'$ in $M$, there exist a $\lambda\in\Lambda$, $\lambda\geq f(\mu),f(\mu')$, and an $n \in \Bbb{N}$ so that for every $n'\geq n$, $$q_{\mu\mu'}f_{\mu'}^{n'}p_{f(\mu')\lambda}= f_{\mu}^{n'}p_{f(\mu)\lambda}.$$
If $M=\Lambda$ and $f=1_{\Lambda}$, then $(1_{\lambda},f_{\lambda}^n)$ is said to be a {\it level $S^*$-morphism}.
The {\it composition} of S$^*$-morphisms $(f,f^n_{\mu}): \mathbf{X} \rightarrow \mathbf{Y} $ and $(g,g^n_{\nu}): \mathbf{Y} \rightarrow \mathbf{Z}=(Z_{\nu},r_{\nu\nu'},N) $ is an S$^*$-morphism $ (h,h^n_{\nu})=(g,g^n_{\nu})(f,f^n_{\mu}): \mathbf{X} \rightarrow \mathbf{Z} $, where $h=fg$ and $h^n_{\nu}=g^n_{\nu}f^n_{g(\nu)}$, for all $n\in\Bbb{N}$. The {\it identity S$^*$-morphism} on $\mathbf{X}$ is an S$^*$-morphism $(1_{\Lambda}, 1_{X_{\lambda}}^n): \mathbf{X} \rightarrow \mathbf{X} $, where $1_{\Lambda}$ is the identity function and $1_{X_{\lambda}}^n=1_{X_{\lambda}}$ in $\mathcal{T}$, for all $n\in \Bbb{N}$ and every $\lambda\in\Lambda$.

An S$^*$-morphism $(f,f^n_{\mu}): \mathbf{X} \rightarrow \mathbf{Y} $ is said to be {\it equivalent} to an S$^*$-morphism $(f',f'^n_{\mu}): \mathbf{X} \rightarrow \mathbf{Y} $, denoted by $(f,f^n_{\mu})\sim (f',f'^n_{\mu})$, provided every $\mu\in M$ admits a $\lambda\in\Lambda$ and $n \in \Bbb{N}$ such that $\lambda\geq f(\mu),f'(\mu)$ and for every $n'\geq n$,
$$f_{\mu}^{n'}p_{f(\mu)\lambda}= f'^{n'}_{\mu}p_{f'(\mu)\lambda}.$$

The relation $\sim$ is an equivalence relation among S$^*$-morphisms of inverse systems in $\mathcal{T}$. The {\it category} pro$^*$-$\mathcal{T}$ has as objects all inverse systems $\mathbf{X}$ in $\mathcal{T}$ and as morphisms all equivalence classes $\mathbf{f^*}=[(f,f^n_{\mu})]$ of S$^*$-morphisms $(f,f^n_{\mu})$. The composition in pro$^*$-$\mathcal{T}$ is well defined by putting
\[\mathbf{g^*f^*}=\mathbf{h^*}=[(h,h^n_{\nu})],\]
where $(h,h^n_{\nu})=(g,g^n_{\nu})(f,f^n_{\mu})=(fg, g^n_{\nu}f^n_{g(\nu)})$.
For every inverse system $\mathbf{X}$ in $\mathcal{T}$, the identity morphism in pro$^*$-$\mathcal{T}$ is $\mathbf{1_X^*}=[(1_{\Lambda}, 1^n_{X_{\Lambda}})]$.

In particular, if $(X)$ and $(Y)$ are two rudimentary inverse systems in HTop, then every set of mappings $f^n:X\rightarrow Y$, $n\in \Bbb{N}$, induces a map $\mathbf{f^*}:(X)\rightarrow(Y)$ in pro$^*$-HTop.

A functor $\underline{\mathcal{J}}= \underline{\mathcal{J}}_{\mathcal{T}} : pro-\mathcal{T} \rightarrow pro^*-\mathcal{T}$ is defined as follows: For every inverse system $\mathbf{X}$ in $\mathcal{T}$, $\underline{\mathcal{J}} (\mathbf{X}) = \mathbf{X}$ and if $\mathbf{f}\in pro-\mathcal{T}(\mathbf{X}, \mathbf{Y} )$ is represented by $( f , f_{\mu})$, then $\underline{\mathcal{J}} ( \mathbf{f} ) = \mathbf{f^*}=[(f,f^n_{\mu})]\in pro^*-\mathcal{T}(\mathbf{X},\mathbf{Y})$ is represented by the S$^*$-morphism $(f,f^n_{\mu})$, where $f^n_{\mu}= f_{\mu}$ for all $\mu\in M$ and $n\in\Bbb{N}$. Since the functor $\underline{\mathcal{J}}$ is faithful, we may consider the category pro-$\mathcal{T}$ as a subcategory of pro$^*$-$\mathcal{T}$.

Let $\mathcal{P}$ be a subcategory of $\mathcal{T}$. A {\it $\mathcal{P}$- expansion} of an object $X$ in $\mathcal{T}$ is a morphism $\mathbf{p} :X\rightarrow \mathbf{X}$ in pro-$\mathcal{T}$, where $\mathbf{X}$ belongs to pro-$\mathcal{P}$ characterised by the following two properties:\\
(E1) For every object $P$ of $\mathcal{P}$ and every map $h:X\rightarrow P$ in $\mathcal{T}$, there is a $\lambda\in \Lambda$ and a map $f:X_{\lambda}\rightarrow P$ in $\mathcal{P}$ such that $fp_{\lambda}=h$;\\
(E2) If $f_0, f_1:X_{\lambda}\rightarrow P$ in $\mathcal{P}$ satisfy $f_0p_{\lambda}=f_1p_{\lambda}$, then there exists a $\lambda'\geq\lambda$ such that $f_0p_{\lambda\lambda'}=f_1p_{\lambda\lambda'}$.

The subcategory $\mathcal{P}$ is said to be {\it pro-reflective} ({\it dense}) subcategory of $\mathcal{T}$ provided that every object $X$ in $\mathcal{T}$ admits a $\mathcal{P}$-expansion $\mathbf{p} :X\rightarrow \mathbf{X}$.

Let $\mathcal{P}$ be a pro-reflective subcategory of $\mathcal{T}$. Let $\mathbf{p} :X\rightarrow \mathbf{X}$ and $\mathbf{p'} :X\rightarrow \mathbf{X'}$ be two $\mathcal{P}$-expansions of the same object $X$ in $\mathcal{T}$, and let $\mathbf{q} : Y \rightarrow \mathbf{Y}$ and $\mathbf{q'} : Y \rightarrow \mathbf{Y'}$ be two $\mathcal{P}$-expansions of the same object $Y$ in $\mathcal{T}$. Then there exist two natural (unique) isomorphisms $\mathbf{i}:\mathbf{X}\rightarrow \mathbf{X}'$ and $\mathbf{j}:\mathbf{Y}\rightarrow \mathbf{Y}'$ in pro-$\mathcal{P}$ with respect to $\mathbf{p}$, $\mathbf{p'}$ and $\mathbf{q}$, $\mathbf{q'}$, respectively. Consequently $\underline{\mathcal{J}}(\mathbf{i}):\mathbf{X}\rightarrow \mathbf{X}'$ and $\underline{\mathcal{J}}(\mathbf{j}):\mathbf{Y}\rightarrow \mathbf{Y}'$ are isomorphisms in pro$^*$-$\mathcal{P}$. A morphism $\mathbf{f^*}:\mathbf{X}\rightarrow \mathbf{Y}$ is said to be {\it pro$^*$-$\mathcal{P}$ equivalent} to a morphism $\mathbf{f'^*}:\mathbf{X'}\rightarrow \mathbf{Y'}$, denoted by $\mathbf{f^*}\sim\mathbf{f'^*}$, provided that the following diagram in pro$^*$-$\mathcal{P}$ commutes:
\begin{equation}
\label{dia}\begin{CD}
\mathbf{X}@>\underline{\mathcal{J}}(\mathbf{i})>>\mathbf{X'}\\
@VV \mathbf{f^*}V@V \mathbf{f'^*}VV\\
\mathbf{Y}@>\underline{\mathcal{J}}(\mathbf{j})>>\mathbf{Y'}.
\end{CD}\end{equation}

This is an equivalence relation on the appropriate subclass of Mor(pro$^*$-$\mathcal{P}$). Now, the {\it coarse shape category} Sh$^*_{(\mathcal{T},\mathcal{P})}$ for the pair $(\mathcal{T},\mathcal{P})$ is defined as follows: The objects of Sh$^*_{(\mathcal{T},\mathcal{P})}$ are all objects of $\mathcal{T}$. A morphism $F^*:X\rightarrow Y$ is the pro$^*$-$\mathcal{P}$ equivalence class $<\mathbf{f^*}>$ of a mapping $\mathbf{f^*}:\mathbf{X}\rightarrow \mathbf{Y}$ in pro$^*$-$\mathcal{P}$. The {\it composition} of $F^*=<\mathbf{f^*}>:X\rightarrow Y$ and $G^*=<\mathbf{g^*}>:Y\rightarrow Z$ is defined by the representatives, i.e., $G^*F^*=<\mathbf{g^*}\mathbf{f^*}>:X\rightarrow Z$. The {\it identity coarse shape morphism} on an object $X$, $1_X^*:X\rightarrow X$, is the pro$^*$-$\mathcal{P}$ equivalence class $<\mathbf{1_X}^*>$ of the identity morphism $\mathbf{1_X}^*$ in pro$^*$-$\mathcal{P}$.

The faithful functor $\mathcal{J}= \mathcal{J}_{(\mathcal{T},\mathcal{P})} : Sh_{(\mathcal{T},\mathcal{P})}\rightarrow Sh^*_{(\mathcal{T},\mathcal{P})}$ is defined by keeping objects fixed and whose morphisms are induced by the inclusion functor $\underline{\mathcal{J}}= \underline{\mathcal{J}}_{\mathcal{T}} : pro-\mathcal{P} \rightarrow pro^*-\mathcal{P}$.
\begin{remark}\label{Sh}
Let $\mathbf{p}:X\rightarrow \mathbf{X}$ and $\mathbf{q}:Y\rightarrow \mathbf{Y}$ be $\mathcal{P}$-expansions of $X$ and $Y$ respectively. For every morphism $f:X\rightarrow Y$ in $\mathcal{T}$, there is a unique morphism $\mathbf{f}:\mathbf{X}\rightarrow \mathbf{Y}$ in pro-$\mathcal{P}$ such that the following diagram commutes in pro-$\mathcal{P}$:
\begin{equation}
\label{dia}\begin{CD}
\mathbf{X}@<<\mathbf{p}<X\\
@VV \mathbf{f}V@V fVV\\
\mathbf{Y}@<<\mathbf{q}<Y.
\end{CD}\end{equation}

If we take other $\mathcal{P}$-expansions $\mathbf{p'}:X\rightarrow \mathbf{X'}$ and $\mathbf{q'}:Y\rightarrow \mathbf{Y'}$, we obtain another morphism $\mathbf{f'}:\mathbf{X'}\rightarrow \mathbf{Y'}$ in pro-$\mathcal{P}$ such that $\mathbf{f'}\mathbf{p'}=\mathbf{q'}f$ and so we have $\mathbf{f}\sim\mathbf{f'}$ and hence $\underline{\mathcal{J}}(\mathbf{f})\sim \underline{\mathcal{J}}(\mathbf{f'})$ in pro$^*$-$\mathcal{P}$. Therefore, every morphism $f\in \mathcal{T}(X,Y)$ yields an pro$^*$-$\mathcal{P}$ equivalence class $<\underline{\mathcal{J}}(\mathbf{f})>$, i.e., a coarse shape morphism $F^*:X\rightarrow Y$, denoted by $\mathcal{S}^*(f)$. If we put $\mathcal{S}^*(X)=X$ for every object $X$ of $\mathcal{T}$, then we obtain a functor $\mathcal{S}^*:\mathcal{T}\rightarrow Sh^*$, which is  called the {\it coarse shape functor}.
\end{remark}

Since the homotopy category of polyhedra HPol is pro-reflective (dense) in the homotopy category HTop \cite[Theorem 1.4.2]{MS}, the coarse shape category Sh$^*_{(HTop,HPol)}$=Sh$^*$ is well defined.

%%%%%%%%%%%%%%%%%%%%%%%%%%%%%%%%%%%%%%%%
\section{A topology on the set of coarse shape morphisms}
Similar to the method of \cite{CM}, we can define  a topology on the set of coarse shape morphisms.  
Let $X$ and $Y$ be topological  spaces. Assume $\mathbf{X}=(X_{\lambda},p_{\lambda\lambda'},\Lambda)$ is an inverse system
in pro-HPol and  $\mathbf{p} : X\rightarrow\mathbf{X}$ is an HPol-expansion of $X$. For every $\lambda\in \Lambda$ and
$F^*\in Sh^*(Y,X)$ put $V^{F^*}_{\lambda}=\{G^*\in Sh^*(Y,X)|\ \  p_{\lambda}\circ F^*= p_{\lambda}\circ G^*\}$. First, we prove the following results.
\begin{proposition}\label{To}
The family $\{V^{F^*}_{\lambda}|\ \  F^*\in Sh^*(Y,X) \ \ \text{and}\ \ \ \lambda\in \Lambda\}$ is a basis for a topology $T_\mathbf{p}$ on $Sh^*(Y,X)$. Moreover, if $\mathbf{p'} : X\rightarrow\mathbf{X'}=(X_{\nu},p_{\nu\nu'}, N)$ is another HPol-expansion of $X$, then the identity map $(Sh^*(Y,X), T_\mathbf{p})\longrightarrow (Sh^*(Y,X), T_{\mathbf{p'}})$ is a homeomorphism which  shows that this topology depends only on $X$ and $Y$.
\end{proposition}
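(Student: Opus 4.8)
The plan is to verify the two standard axioms for a basis and then deal with the independence from the chosen expansion. For the basis property, the nontrivial point is that if $G^* \in V^{F^*}_{\lambda} \cap V^{H^*}_{\mu}$, then we must find some $\nu \in \Lambda$ and a basic set $V^{G^*}_{\nu}$ with $V^{G^*}_{\nu} \subseteq V^{F^*}_{\lambda} \cap V^{H^*}_{\mu}$. First I would use the directedness of $\Lambda$ to pick $\nu \geq \lambda, \mu$. Then, since $p_{\lambda} = p_{\lambda\nu} \circ p_{\nu}$ and $p_{\mu} = p_{\mu\nu}\circ p_{\nu}$ in pro-HPol (applied after the coarse shape functor), any $K^* \in V^{G^*}_{\nu}$ satisfies $p_{\nu}\circ K^* = p_{\nu}\circ G^*$, hence $p_{\lambda}\circ K^* = p_{\lambda\nu}\circ p_{\nu}\circ K^* = p_{\lambda\nu}\circ p_{\nu}\circ G^* = p_{\lambda}\circ G^* = p_{\lambda}\circ F^*$, so $K^* \in V^{F^*}_{\lambda}$; symmetrically $K^* \in V^{H^*}_{\mu}$. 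Also note $G^* \in V^{G^*}_{\nu}$ trivially, and every $G^*$ lies in $V^{G^*}_{\lambda}$ for each $\lambda$, so the family covers $Sh^*(Y,X)$. This gives the basis.

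For the independence statement, let $\mathbf{p}:X\to\mathbf{X}$ and $\mathbf{p'}:X\to\mathbf{X'}$ be two HPol-expansions. By the universal property of expansions there are natural isomorphisms $\mathbf{i}:\mathbf{X}\to\mathbf{X'}$ and $\mathbf{i}^{-1}:\mathbf{X'}\to\mathbf{X}$ in pro-HPol with $\mathbf{i}\,\mathbf{p} = \mathbf{p'}$ and $\mathbf{i}^{-1}\mathbf{p'} = \mathbf{p}$; applying $\underline{\mathcal{J}}$ these become isomorphisms in pro$^*$-HPol, and composing with coarse shape morphisms from $Y$ is legitimate. It suffices to show the identity map is continuous in each direction, and by symmetry one direction suffices. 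So I would fix a basic open set $V^{F^*}_{\nu}$ of $(Sh^*(Y,X),T_{\mathbf{p'}})$ (with $\nu$ indexing $\mathbf{X'}$) and show it is open in $T_{\mathbf{p}}$. Writing $\mathbf{i}^{-1} = [(h, h_\lambda)]$, property (E1)/(E2) of expansions (or more directly the commutation $\mathbf{i}\,\mathbf{p} = \mathbf{p'}$ read at level $\nu$) yields a $\lambda \in \Lambda$ and an HPol-morphism $i_\nu : X_\lambda \to X'_\nu$ with $i_\nu \circ p_\lambda = p'_\nu$ (up to homotopy). Then for any $G^* \in V^{F^*}_{\lambda}$ we get $p'_\nu \circ G^* = i_\nu \circ p_\lambda \circ G^* = i_\nu \circ p_\lambda \circ F^* = p'_\nu \circ F^*$, i.e. $G^* \in V^{F^*}_\nu$; hence $V^{F^*}_\lambda \subseteq V^{F^*}_\nu$, and since $F^*$ was an arbitrary point of $V^{F^*}_\nu$ with $\lambda$ depending only on $\nu$, the set $V^{F^*}_\nu$ is a union of $T_{\mathbf{p}}$-basic sets, hence $T_{\mathbf{p}}$-open. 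This proves continuity of the identity $ (Sh^*(Y,X), T_{\mathbf p}) \to (Sh^*(Y,X), T_{\mathbf p'})$, and the reverse inclusion is obtained by swapping the roles of $\mathbf{p}$ and $\mathbf{p'}$, giving the homeomorphism.

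I expect the main obstacle to be bookkeeping about where the various equalities live: the relations $p_{\lambda} = p_{\lambda\nu}p_{\nu}$ and $i_\nu p_\lambda = p'_\nu$ hold only up to the equivalence relation defining morphisms in pro-HTop (and after $\mathcal{S}^*$, in $Sh^*$), so one has to be careful that "$p_\lambda \circ F^* = p_\lambda \circ G^*$" is a well-defined condition on coarse shape morphisms $F^*, G^* : Y \to X$ — which it is, because composition in $Sh^*$ is well defined and $p_\lambda$ here really denotes $\mathcal{S}^*(p_\lambda)$ composed appropriately. A secondary subtlety is extracting the single index $\lambda$ (depending only on $\nu$) from the isomorphism $\mathbf{i}^{-1}$: this is exactly property (E1) applied to the component $p'_\nu : X \to X'_\nu$ together with (E2) to make the factorization essentially unique, and I would cite Remark~\ref{Sh} / the expansion axioms rather than re-derive it. Everything else — covering, the intersection condition, and the symmetric argument — is routine once the functoriality of $\mathcal{S}^*$ and the definition of $V^{F^*}_\lambda$ are in hand.
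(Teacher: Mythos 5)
Your proposal is correct and follows essentially the same route as the paper: the basis axioms are verified by composing with the bonding morphisms $p_{\lambda\nu}$ after choosing $\nu\geq\lambda,\mu$ by directedness (the paper phrases this via the intermediate observation that $V^{F^*}_{\lambda}=V^{H^*}_{\lambda}$ whenever $H^*\in V^{F^*}_{\lambda}$, which your chained equalities reproduce), and the independence of the expansion is obtained exactly as in the paper by reading the canonical isomorphism $\mathbf{i}:\mathbf{X}\to\mathbf{X'}$ at level $\nu$ to get $V^{F^*}_{\phi(\nu)}\subseteq V^{F^*}_{\nu}$ and concluding by symmetry. No gaps.
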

\begin{proof}
We know that  $F^*\in V^{F^*}_{\lambda}$ for every $\lambda\in \Lambda$ and every $F^*\in Sh^*(Y,X)$.  Suppose $F^*, G^*\in Sh^*(Y,X)$ and $\lambda_1,\lambda_2\in \Lambda$ and $H^*\in V^{F^*}_{\lambda_1}\cap V^{G^*}_{\lambda_2}$. Since $H^*\in V^{F^*}_{\lambda_1}$, then $p_{\lambda_1}\circ F^*= p_{\lambda_1}\circ H^*$. We show that $V^{F^*}_{\lambda_1}=V^{H^*}_{\lambda_1}$.
Suppose $K^*\in V^{F^*}_{\lambda_1}$, so $p_{\lambda_1}\circ K^*= p_{\lambda_1}\circ F^*=  p_{\lambda_1}\circ H^*$. Therefore $K^*\in V^{H^*}_{\lambda_1}$ and hence $V^{F^*}_{\lambda_1}\subseteq V^{H^*}_{\lambda_1}$. Conversely, if  $K^*\in V^{H^*}_{\lambda_1}$, then we have $p_{\lambda_1}\circ K^*= p_{\lambda_1}\circ H^*=  p_{\lambda_1}\circ F^*$. So $K^*\in V^{F^*}_{\lambda_1}$ and hence $V^{H^*}_{\lambda_1}\subseteq V^{F^*}_{\lambda_1}$. 
Similarly, since $H^*\in V^{G^*}_{\lambda_1}$, we have $V^{G^*}_{\lambda_1}=V^{H^*}_{\lambda_1}$ and so  $H^*\in V^{H^*}_{\lambda_1}\cap V^{H^*}_{\lambda_2}$. We know that  there exists a $\lambda\in \Lambda$ such that $\lambda\geq \lambda_1, \lambda_2$. We show that $H^*\in V^{H^*}_{\lambda}\subseteq V^{H^*}_{\lambda_1}\cap V^{H^*}_{\lambda_2}$ which completes the proof of the first assertion.

Given $K^*\in V^{H^*}_{\lambda}$. We have $p_{\lambda_1 \lambda}p_{\lambda}=p_{\lambda_1}$ and $p_{\lambda_2 \lambda}p_{\lambda}=p_{\lambda_2}$. Since $K^*\in V^{H^*}_{\lambda}$, so $p_{\lambda}\circ K^*= p_{\lambda}\circ H^*$ and therefore $p_{\lambda_1}\circ K^*= p_{\lambda_1}\circ H^*$. Hence $K^*\in V^{H^*}_{\lambda_1}$. Similarly $K^*\in V^{H^*}_{\lambda_{2}}$ and so $K^*\in V^{H^*}_{\lambda_1}\cap V^{H^*}_{\lambda_2}$. %Hence $V^{H^*}_{\lambda}\in V^{H^*}_{\lambda_1}\cap V^{H^*}_{\lambda_2}$.

Now, suppose that $\mathbf{p'} : X\rightarrow\mathbf{X'}$ is another HPol-expansion of $X$. Then there exists a unique isomorphism $\mathbf{i}:\mathbf{X}\longrightarrow \mathbf{X'}$ given by $(i_{\nu}, \phi)$ such that $\mathbf{i}\circ \mathbf{p}=\mathbf{p'}$. Let $V_{\nu}^{F^*}$ be an arbitrary element in the basis of $T_{\mathbf{p^{\prime}}}$, where $\nu\in N$ and $F^*\in Sh^*(Y,X)$. Then $\phi(\nu)\in \Lambda$. We show that $V_{\nu}^{F^*}\in T_\mathbf{p} $ and  this follows that $T_{\mathbf{p'}}\subseteq T_\mathbf{p}$. Given $G^*\in V_{\nu}^{F^*}$, thus $V_{\nu}^{G^*}=V_{\nu}^{F^*}$.   For each $H^*\in V^{G^*}_{\phi(\nu)}$, we have $p_{\phi(\nu)}\circ G^*= p_{\phi(\nu)}\circ H^*$ and so $p'_{\nu}\circ G^*= p'_{\nu}\circ H^*$. Hence $H^*\in V^{G^*}_{\nu}$ and therefore $G^*\in V_{\phi(\nu)}^{G^*}\subseteq V_{\nu}^{G^*}=V_{\nu}^{F^*}$.   Similarly, one can  show that $T_{\mathbf{p}}\subseteq T_{\mathbf{p'}}$.
\end{proof}

%The following corollary is immediate.
\begin{corollary}\label{Dis}
Let $X\in Obj(HPol)$. Then $Sh^*(Y,X)$ is discrete, for every topological space $Y$.
\end{corollary}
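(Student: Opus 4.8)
The plan is to exploit the fact that when $X$ is itself a polyhedron, it admits a rudimentary HPol-expansion, namely the trivial one $\mathbf{p}:X\rightarrow (X)$ with a single term $X_{\lambda_0}=X$ and $p_{\lambda_0}=1_X$. By Proposition \ref{To} the topology $T_{\mathbf{p}}$ on $Sh^*(Y,X)$ does not depend on the choice of HPol-expansion, so it suffices to compute the basic open sets for this particular expansion. First I would write down the index set $\Lambda=\{\lambda_0\}$ and observe that the only basic open sets are the sets $V^{F^*}_{\lambda_0}=\{G^*\in Sh^*(Y,X)\mid p_{\lambda_0}\circ F^*=p_{\lambda_0}\circ G^*\}$ as $F^*$ ranges over $Sh^*(Y,X)$.

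Next I would observe that $p_{\lambda_0}=1_X$, so the condition $p_{\lambda_0}\circ F^*=p_{\lambda_0}\circ G^*$ collapses to $F^*=G^*$ as coarse shape morphisms (composing a coarse shape morphism with the identity coarse shape morphism $1_X^*=\mathcal{S}^*(1_X)$ returns the morphism unchanged, by functoriality of $\mathcal{S}^*$ and the identity axiom in $Sh^*$). Hence $V^{F^*}_{\lambda_0}=\{F^*\}$ for every $F^*$. Therefore every singleton is open, and since arbitrary unions of basic open sets are open, every subset of $Sh^*(Y,X)$ is open; that is, $T_{\mathbf{p}}$ is the discrete topology.

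The only mild subtlety — and the step I would be most careful about — is justifying that $p_{\lambda_0}\circ F^*=F^*$ literally as elements of the set $Sh^*(Y,X)$, i.e. that precomposition is not meant here but postcomposition by the identity coarse shape morphism of $X$ acts trivially. This is immediate from the definition of $Sh^*$ as a category with identities and from Remark \ref{Sh}, which identifies $\mathcal{S}^*(1_X)$ with $1^*_X$; one just notes that the map $Sh^*(Y,X)\rightarrow Sh^*(Y,X)$, $G^*\mapsto p_{\lambda_0}\circ G^*$, used implicitly in the definition of $V^{G^*}_{\lambda_0}$, is the identity map. Everything else is formal, so the proof is short.
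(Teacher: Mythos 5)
Your proof is correct and is exactly the argument the paper intends (the corollary is stated without proof as an immediate consequence of Proposition \ref{To}): take the rudimentary HPol-expansion $1_X\colon X\to (X)$, note the basic open sets collapse to singletons, and invoke independence of the topology from the choice of expansion. The subtlety you flag about $p_{\lambda_0}\circ F^*=F^*$ is handled correctly.
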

\begin{example}
Let $P=\{*\}$ be a singleton and $Q=\{*\}\dot{\cup} \{*\}$ (disjoint union). Then $card(Sh(P,Q)) = 2$ while $card(Sh^*(P,Q)) =2^{\aleph_0}$ (see \cite[Example 7.4]{UB}). It shows that $Sh(P,Q)$  is a countable discrete space while  $Sh^{*}(P,Q)$  is an uncountable discrete space.
\end{example}
%The following theorem is an important result in this section.
\begin{theorem}
The map $\Omega:Sh^*(X,Y)\times Sh^*(Y,Z)\longrightarrow Sh^*(X,Z)$ given by the composition $\Omega(F^*,G^*)=G^*\circ F^*$ is continuous, for arbitrary topological spaces $X,Y$ and $Z$.
\end{theorem}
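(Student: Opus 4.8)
The plan is to prove continuity at an arbitrary point $(F_0^*,G_0^*)\in Sh^*(X,Y)\times Sh^*(Y,Z)$: given a basic neighbourhood of $\Omega(F_0^*,G_0^*)=G_0^*\circ F_0^*$ I will exhibit a basic neighbourhood of $(F_0^*,G_0^*)$ that $\Omega$ maps into it. First I fix HPol-expansions $\mathbf{p}\colon X\to\mathbf{X}=(X_\lambda,p_{\lambda\lambda'},\Lambda)$, $\mathbf{q}\colon Y\to\mathbf{Y}=(Y_\mu,q_{\mu\mu'},M)$ and $\mathbf{r}\colon Z\to\mathbf{Z}=(Z_\nu,r_{\nu\nu'},N)$; by Proposition~\ref{To} the topologies on the three morphism sets are exactly the ones these expansions determine, so a basic neighbourhood of $G_0^*\circ F_0^*$ has the form $V^{G_0^*\circ F_0^*}_\nu=\{K^*\in Sh^*(X,Z)\mid r_\nu\circ K^*=r_\nu\circ G_0^*\circ F_0^*\}$ for some $\nu\in N$.

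I then choose $S^*$-morphism representatives $(f_0,(f_0)^n_\mu)$ of $F_0^*$ and $(g_0,(g_0)^n_\nu)$ of $G_0^*$, and I set $\mu_0:=g_0(\nu)\in M$, $h^n_0:=(g_0)^n_\nu\colon Y_{\mu_0}\to Z_\nu$ and $\lambda_0:=f_0(\mu_0)\in\Lambda$. The assertion to prove is
\[
\Omega\bigl(V^{F_0^*}_{\mu_0}\times V^{G_0^*}_\nu\bigr)\subseteq V^{G_0^*\circ F_0^*}_\nu .
\]
Since $Z_\nu$ and $Y_{\mu_0}$ are polyhedra, composing a coarse shape morphism with $\mathcal{S}^*(r_\nu)$ (respectively with $\mathcal{S}^*(q_{\mu_0})$) corresponds, on representatives, to composition with the canonical projection $S^*$-morphism $\mathbf{Z}\to(Z_\nu)$ (respectively $\mathbf{Y}\to(Y_{\mu_0})$); this is immediate from property (E1) of an expansion, using the identity of the polyhedron. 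Unwinding the composition law for $S^*$-morphisms then shows that $r_\nu\circ G_0^*\circ F_0^*$ is represented by the $S^*$-morphism $\mathbf{X}\to(Z_\nu)$ of constant index $\lambda_0$ with component maps $h^n_0\,(f_0)^n_{\mu_0}$, and that, for any $(F^*,G^*)\in V^{F_0^*}_{\mu_0}\times V^{G_0^*}_\nu$ with representatives $(f,f^n_\mu)$ and $(g,g^n_\nu)$, setting $\mu:=g(\nu)$ and $h^n:=g^n_\nu$, the morphism $r_\nu\circ G^*\circ F^*$ is represented by the $S^*$-morphism $\mathbf{X}\to(Z_\nu)$ of constant index $f(\mu)$ with component maps $h^n f^n_\mu$.

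It remains to check that these two $S^*$-morphisms into $(Z_\nu)$ are equivalent. Unwinding $r_\nu\circ G^*=r_\nu\circ G_0^*$ yields $\mu_1\geq\mu,\mu_0$ and $n_1$ with $h^n q_{\mu\mu_1}=h^n_0 q_{\mu_0\mu_1}$ in HPol for all $n\geq n_1$; unwinding $q_{\mu_0}\circ F^*=q_{\mu_0}\circ F_0^*$ yields $\lambda^*\geq f(\mu_0),\lambda_0$ and $n^*$ with $f^n_{\mu_0}p_{f(\mu_0)\lambda^*}=(f_0)^n_{\mu_0}p_{\lambda_0\lambda^*}$ for all $n\geq n^*$; and the $S^*$-morphism compatibility of $(f,f^n_\mu)$ for the related pairs $\mu\leq\mu_1$ and $\mu_0\leq\mu_1$ supplies indices $\lambda_2,\lambda_3$ and integers $n_2,n_3$ linking $f^n_{\mu_1}$, via the bonding maps, to $f^n_\mu$ and to $f^n_{\mu_0}$. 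Taking $\lambda'$ above $\lambda_2,\lambda_3,\lambda^*$ and $n':=\max\{n_1,n_2,n_3,n^*\}$, the chain
\[
\begin{aligned}
h^n f^n_\mu\, p_{f(\mu)\lambda'}
&=h^n q_{\mu\mu_1}\,f^n_{\mu_1}\,p_{f(\mu_1)\lambda'}
=h^n_0 q_{\mu_0\mu_1}\,f^n_{\mu_1}\,p_{f(\mu_1)\lambda'}\\
&=h^n_0\,f^n_{\mu_0}\,p_{f(\mu_0)\lambda'}
=h^n_0\,(f_0)^n_{\mu_0}\,p_{\lambda_0\lambda'}
\end{aligned}
\]
holds in HPol for all $n\geq n'$, which is exactly the equivalence condition for the two $S^*$-morphisms. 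Hence $r_\nu\circ G^*\circ F^*=r_\nu\circ G_0^*\circ F_0^*$, i.e.\ $G^*\circ F^*\in V^{G_0^*\circ F_0^*}_\nu$; as $V^{F_0^*}_{\mu_0}\times V^{G_0^*}_\nu$ is an open neighbourhood of $(F_0^*,G_0^*)$ and $\nu$ was arbitrary, $\Omega$ is continuous.

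I expect the only genuine difficulty to be organisational: writing down correctly the $S^*$-morphism representing a composite (the index functions compose, the component maps compose at the shifted index, and the various ``for all sufficiently large $n$'' clauses must be intersected), and keeping track that $V^{F_0^*}_{\mu_0}$ and $V^{G_0^*}_\nu$ depend only on $F_0^*$, $G_0^*$ and $\nu$, whereas the auxiliary indices $\lambda'$ and $n'$ are allowed to depend on the particular pair $(F^*,G^*)$ being tested.
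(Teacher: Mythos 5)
Your proof is correct and follows the same strategy as the paper's: both fix HPol-expansions, take the basic neighbourhood $V^{F_0^*}_{g_0(\nu)}\times V^{G_0^*}_{\nu}$ of $(F_0^*,G_0^*)$ and show that $\Omega$ maps it into $V^{G_0^*\circ F_0^*}_{\nu}$. The only divergence is in how that inclusion is checked: the paper factors $r_{\nu}\circ G_0^*$ as $\alpha\circ q_{g_0(\nu)}$ through the auxiliary $S^*$-morphism $\alpha=(g^n_{\nu})\colon (Y_{g_0(\nu)})\to (Z_{\nu})$ and concludes by a purely formal chain of compositions, whereas you verify the required equivalence of $S^*$-morphism representatives by an explicit index-and-$n$ computation; both verifications are sound.
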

\begin{proof}
Consider  HPol-expansions $\mathbf{p} : X\rightarrow\mathbf{X}=(X_{\lambda},p_{\lambda\lambda'}, \Lambda)$, $\mathbf{q} : Y\rightarrow\mathbf{Y}=(Y_{\mu},q_{\mu\mu'},M)$ and $\mathbf{r} : Z\rightarrow\mathbf{Z}=(Z_{\nu},r_{\nu\nu'}, N)$ of $X, Y$ and $Z$, respectively. 
Let $F_0^*\in Sh^*(X,Y)$ and $G_0^*\in Sh^*(Y,Z)$ given by $(f^n_{\mu}, f)$ and $(g^n_{\nu}, g)$, respectively. Let $\nu\in N$ and $G_0^*\circ F_0^*\in V_{\nu}^{G_0^*\circ F_0^*}$. We  show that $\Omega( V_{g(\nu)}^{F_0^*}\times V_{\nu}^{G_0^*})\subseteq  V_{\nu}^{G_0^*\circ F_0^*}$. To do this, we must show that for any $F^*\in V_{g(\nu)}^{F_0^*}$ and $G^*\in V_{\nu}^{G_0^*}$, $r_{\nu}\circ G^*\circ F^*= r_{\nu}\circ G_0^*\circ F_0^*$. Since  $F^*\in V_{g(\nu)}^{F_0^*}$, we have $q_{g(\nu)}\circ F_0^*=q_{g(\nu)}\circ F^*$ and  since  $G^*\in V_{\nu}^{G_0^*}$, we have $r_{\nu}\circ G_0^*=r_{\nu}\circ G^*$. Note that $r_{\nu}\circ G_0^*$ is an $S^*$-morphism given by $(g^n_{\nu}, g\alpha_{\nu})$, where $\alpha_{\nu}:\{\nu\}\longrightarrow N$ is the inclusion map. Define $\alpha:Y_{g(\nu)}\longrightarrow Z_{\nu}$ as an $S^*$-morphism given by $(g^n_{\nu}, \beta_{\nu})$, where $\beta_{\nu}:\{\nu\}\longrightarrow \{g(\nu)\}$. We have $r_{\nu}\circ G_0^*=\alpha\circ q_{g(\nu)}$ and so  $r_{\nu}\circ G_0^*\circ F_0^*= \alpha\circ q_{g(\nu)}\circ F_0^*= \alpha\circ q_{g(\nu)}\circ F^*=r_{\nu}\circ G_0^*\circ F^*= r_{\nu}\circ G^*\circ F^*$.
\end{proof}

The following corollary is an immediate consequence of the above theorem.
\begin{corollary}\label{C}
Let $X$ and $Y$ be topological spaces and let $F^*:X\longrightarrow Y$ be an $S^*$-morphism. Let $Z$ be a topological space and consider $F^*_1:Sh^*(Y,Z)\longrightarrow Sh^*(X,Z)$ and $F^*_2:Sh^*(Z,X)\longrightarrow Sh^*(Z,Y)$ to be defined by $F^*_1(H^*)=H^*\circ F^*$ and $F^*_2(G^*)=F^*\circ G^*$.\\
(i) $F_1^*$ and $F_2^*$ are continuous, $(G^*\circ F^*)_2=G^*_2\circ F^*_2$, $(G^*\circ F^*)_1=F^*_1\circ G^*_1$ and $Id^*_1$ and $Id^*_2$ are the corresponding identity maps.\\
(ii) Assume $Sh^*(X)\geq Sh^*(Y)$. Then $Sh^*(Y,Z)$ is homeomorphic  to a retract of $Sh^*(X,Z)$ and $Sh^*(Z,Y)$ is homeomorphic  to a retract of $Sh^*(Z,X)$, for every topological space $Z$. \\
(iii)  Assume $Sh^*(X)= Sh^*(Y)$. Then $Sh^*(Y,Z)$ is homeomorphic  to $Sh^*(X,Z)$ and $Sh^*(Z,Y)$ is homeomorphic  to $Sh^*(Z,X)$, for every topological space $Z$. \\
\end{corollary}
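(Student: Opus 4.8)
The plan is to prove Corollary~\ref{C} as a formal consequence of the continuity of the composition map $\Omega$ proved in the preceding theorem, following the standard pattern for such ``action'' maps. First I would observe that $F_1^*$ and $F_2^*$ are simply the restrictions of $\Omega$ obtained by fixing one coordinate: $F_1^*$ is $\Omega(F^*,-)$ on $\{F^*\}\times Sh^*(Y,Z)\to Sh^*(X,Z)$ precomposed with the obvious homeomorphism $Sh^*(Y,Z)\cong\{F^*\}\times Sh^*(Y,Z)$, and dually $F_2^*$ is $\Omega(-,G^*)$ with the slot $Sh^*(Z,Y)$ filled in. Since a continuous map of two variables is continuous in each variable separately (the inclusion of a slice into a product is continuous, being continuous into each factor), $F_1^*$ and $F_2^*$ are continuous. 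Alternatively, to avoid even invoking the two-variable theorem, I can argue directly with the basic open sets: for $H^*\circ F^*\in V_\nu^{H^*\circ F^*}$ one checks exactly as in the proof of the theorem that $F_1^*\big(V_\nu^{H^*}\big)\subseteq V_\nu^{H^*\circ F^*}$, and symmetrically $F_2^*\big(V_{g(\nu)}^{G^*}\big)\subseteq V_\nu^{F^*\circ G^*}$ when $F^*$ is represented by $(f^n_\mu,f)$; this is the same computation already carried out, just with one variable held fixed.

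Next I would dispatch the functoriality identities in part~(i). These are purely algebraic and hold already at the level of coarse shape morphisms, independently of the topology: for $H^*\in Sh^*(Z',Z)$ one has $(G^*\circ F^*)_1(H^*)=H^*\circ(G^*\circ F^*)=(H^*\circ G^*)\circ F^*=F_1^*\big(G_1^*(H^*)\big)$ by associativity of composition in $Sh^*$, so $(G^*\circ F^*)_1=F_1^*\circ G_1^*$; dually $(G^*\circ F^*)_2(K^*)=(G^*\circ F^*)\circ K^*=G^*\circ(F^*\circ K^*)=G_2^*\big(F_2^*(K^*)\big)$ gives $(G^*\circ F^*)_2=G_2^*\circ F_2^*$; and $(Id^*)_1(H^*)=H^*\circ Id^*=H^*$, $(Id^*)_2(K^*)=Id^*\circ K^*=K^*$ identify these with the identity maps. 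No obstacle here.

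For part~(ii), the hypothesis $Sh^*(X)\geq Sh^*(Y)$ means there are coarse shape morphisms $F^*:X\to Y$ and $S^*:Y\to X$ with $F^*\circ S^*=Id_Y^*$ (i.e. $Y$ is a ``coarse shape retract'' of $X$). Then $S_1^*:Sh^*(Y,Z)\to Sh^*(X,Z)$ is continuous by part~(i), $F_1^*:Sh^*(X,Z)\to Sh^*(Y,Z)$ is continuous, and $F_1^*\circ S_1^*=(S^*\circ F^*)_1$\,---\,wait, I must be careful with the order: by the identity just proved, $F_1^*\circ S_1^* = (F^*\circ S^*)_1 = (Id_Y^*)_1 = Id$, so $S_1^*$ is a section of the continuous retraction $F_1^*$ and hence a homeomorphism onto its image, which is a retract of $Sh^*(X,Z)$; the argument for $Sh^*(Z,Y)$ as a retract of $Sh^*(Z,X)$ is symmetric, using $F_2^*$ and $S_2^*$ with $(F^*\circ S^*)_2$ evaluated correctly. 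Part~(iii) is then the special case where $F^*$ is a coarse shape equivalence, so that additionally $S_1^*\circ F_1^* = (S^*\circ F^*)_1 = Id$ as well (and dually for the subscript-$2$ maps), upgrading the retraction to a homeomorphism in both directions. The main point requiring care\,---\,and the only place where a slip is easy\,---\,is keeping straight the contravariance of $(-)_1$ versus the covariance of $(-)_2$ so that the composition identities are applied in the right order; everything else is a direct translation of continuity of $\Omega$ plus associativity in the coarse shape category.
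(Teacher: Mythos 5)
Your proof is correct and is exactly the argument the paper has in mind: the paper gives no proof of this corollary at all, declaring it an immediate consequence of the continuity of $\Omega$, and your elaboration (continuity via slices of $\Omega$ or the one-variable basic-open-set computation, associativity for the functoriality identities, and the section--retraction pair coming from coarse shape domination) is the intended one. The only blemish is in part (ii), where the directions of $F_1^*$ and $S_1^*$ are interchanged: since $(-)_1$ is contravariant, $F^*:X\to Y$ gives $F_1^*:Sh^*(Y,Z)\to Sh^*(X,Z)$ and $S^*:Y\to X$ gives $S_1^*:Sh^*(X,Z)\to Sh^*(Y,Z)$, so the identity $(F^*\circ S^*)_1=S_1^*\circ F_1^*=Id$ exhibits $F_1^*$ (not $S_1^*$) as the embedding of $Sh^*(Y,Z)$ onto a retract of $Sh^*(X,Z)$, with retracting idempotent $F_1^*\circ S_1^*$ --- a relabelling, not a gap.
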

Now, we want to prove the following theorem which is usefull to study the topological properties of the space of coarse shape morphisms.
\begin{theorem}\label{inv}
Let $X$ and $Y$ be topological spaces and let $\mathbf{p}:X\longrightarrow \mathbf{X}=(X_{\lambda}, p_{\lambda\lambda'},\Lambda)$ and  $\mathbf{q}:Y\longrightarrow \mathbf{Y}=(Y_{\mu}, q_{\mu\mu'},M)$ be  HPol-expansions of $X$ and $Y$, respectively. Take $\mathbf{Sh^*}(X,Y)=(Sh^*(X,Y_{\mu}), (q_{\mu\mu'})_*, M)$ and consider the morphism $\mathbf{q}_*:Sh^*(X,Y)\longrightarrow \mathbf{Sh^*}(X,Y)$ induced by $\mathbf{q}$. Then $\mathbf{q}_*$ is an inverse limit of  $\mathbf{Sh^*}(X,Y)$ in Top.
\end{theorem}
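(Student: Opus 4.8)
The plan is to verify the universal property of the inverse limit directly, by showing that the cone $\mathbf{q}_*:Sh^*(X,Y)\to \mathbf{Sh^*}(X,Y)$ is (i) a morphism of cones, i.e.\ compatible with the bonding maps $(q_{\mu\mu'})_*$, (ii) a bijection onto $\lim_{\leftarrow} Sh^*(X,Y_\mu)$ (the limit taken in $\mathbf{Set}$), and (iii) initial/final for the topology, i.e.\ that the topology $T_{\mathbf q}$ on $Sh^*(X,Y)$ coincides with the subspace topology inherited from the product $\prod_{\mu\in M} Sh^*(X,Y_\mu)$, each factor being discrete by Corollary~\ref{Dis} (since each $Y_\mu$ is a polyhedron). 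Once these three points are established, the standard fact that the limit in $\mathbf{Top}$ is computed as the appropriate subspace of the product of the underlying objects gives the result.

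First I would make the bijectivity precise. An element of $\lim_{\leftarrow} Sh^*(X,Y_\mu)$ is a thread $(H^*_\mu)_{\mu\in M}$ with $H^*_\mu\in Sh^*(X,Y_\mu)$ and $(q_{\mu\mu'})_* H^*_{\mu'}=H^*_\mu$ whenever $\mu\le\mu'$; the map $\mathbf q_*$ sends $F^*$ to the thread $(q_\mu\circ F^*)_\mu$. Surjectivity amounts to the observation that, since $\mathbf q:Y\to\mathbf Y$ is an HPol-expansion, a coherent family of coarse shape morphisms into the $Y_\mu$ assembles into a single $S^*$-morphism $\mathbf X\to\mathbf Y$ (after suitably choosing index functions), hence into a coarse shape morphism $X\to Y$; this is essentially the $\mathrm{pro}^*$-analogue of the classical statement $Sh(X,Y)=\lim_{\leftarrow}Sh(X,Y_\mu)$, and I would lean on the expansion properties (E1),(E2) together with the structure of $S^*$-morphisms recalled in the Preliminaries. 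Injectivity is the statement that if $q_\mu\circ F^*=q_\mu\circ G^*$ for all $\mu$, then $F^*=G^*$, which is again immediate from the defining property of the expansion $\mathbf q$ (two coarse shape morphisms agreeing after composition with every $q_\mu$ are equal).

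Next I would identify the topology. A basic open set of $T_{\mathbf q}$ is $V^{F^*}_\mu=\{G^*\mid q_\mu\circ F^*=q_\mu\circ G^*\}$, which under $\mathbf q_*$ is exactly the preimage of the singleton $\{q_\mu\circ F^*\}\subseteq Sh^*(X,Y_\mu)$ under the $\mu$-th coordinate projection; since $Sh^*(X,Y_\mu)$ is discrete, this singleton is open, so $\mathbf q_*$ is continuous and the topology $T_{\mathbf q}$ is coarser than the subspace topology from the product. Conversely, a subbasic open set of the product topology restricted to the image is the preimage of an arbitrary subset of some $Sh^*(X,Y_\mu)$; its preimage under $\mathbf q_*$ is a union of sets $V^{F^*}_\mu$, hence open in $T_{\mathbf q}$. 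Thus $\mathbf q_*$ is a homeomorphism onto its image with the subspace topology, and by (ii) its image is the full set-theoretic limit. Combining this with (i), which is a routine check that $q_\mu = q_{\mu\mu'}\circ q_{\mu'}$ in $\mathrm{pro}$-HPol yields $(q_{\mu\mu'})_*\circ q_{\mu'*}=q_{\mu*}$, completes the verification that $\mathbf q_*$ is the inverse limit in Top.

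The main obstacle I anticipate is the surjectivity in step (ii): assembling a coherent thread of coarse shape morphisms $X\to Y_\mu$ into a single morphism $X\to Y$ in $Sh^*$ requires care with the index functions $f:M\to\Lambda$ and with the ``for all $n'\ge n$'' coherence conditions built into the definition of an $S^*$-morphism, since these must be chosen compatibly across all $\mu$. I would handle this by passing to a level form or by a cofinality/diagonal argument, invoking the expansion axioms (E1), (E2) to realize each $q_\mu\circ F^*$ by maps on the $X_\lambda$ and then checking the equivalence relation $\sim$ is respected; everything else (compatibility of the cone, discreteness of the factors, the elementary topology comparison) is straightforward.
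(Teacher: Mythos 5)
Your proposal is correct, and its mathematical core coincides with the paper's, but the packaging is genuinely different. The paper verifies the universal property directly: given an arbitrary cone $\mathbf{g}:Z\longrightarrow(Sh^*(X,Y_{\mu}),(q_{\mu\mu'})_*,M)$ in pro-Top, it builds the mediating map $\alpha$ explicitly by choosing, for each $z$ and $\mu$, a representative $(g^n_{\mu,z},\lambda_{\mu,z})$ of $g_{\mu}(z)$, setting $h^z(\mu)=\lambda_{\mu,z}$ and $\alpha(z)=\langle[(g^n_{\mu,z},h^z)]\rangle$; the thread condition $(q_{\mu\mu'})_*g_{\mu'}=g_{\mu}$ supplies exactly the $\lambda$ and $n$ needed in the definition of an $S^*$-morphism, and continuity of $\alpha$ follows from $\alpha^{-1}(V^{F^*}_{\mu})=g_{\mu}^{-1}(g_{\mu}(z))$ together with discreteness of $Sh^*(X,Y_{\mu})$. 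You instead identify $(Sh^*(X,Y),T_{\mathbf q})$ with the canonical model of the limit as a subspace of $\prod_{\mu}Sh^*(X,Y_{\mu})$; your surjectivity step is precisely the paper's construction of $\alpha$ applied to the tautological thread, and your topological comparison ($V^{F^*}_{\mu}$ is the pullback of a point in a discrete factor) is the same computation the paper uses for continuity. Two remarks. First, the obstacle you flag in the surjectivity step is less serious than you fear: no global compatibility of the index functions is needed, since one may pick representatives for each $\mu$ independently and the thread condition alone yields the required $\lambda\geq\lambda_{\mu,z},\lambda_{\mu',z}$ and $n$ for each related pair $\mu\leq\mu'$ -- this is exactly how the paper proceeds. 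Second, the expansion axioms (E1),(E2) for $\mathbf q$ are not what is really used here; what matters is only that a morphism of pro$^*$-HPol into $\mathbf Y$ is determined by, and can be assembled from, its compositions with the projections to the $Y_{\mu}$, which is internal to the description of $S^*$-morphisms. Your route has the advantage of making the prodiscrete (ultrametric) nature of the topology transparent; the paper's has the advantage of literally exhibiting the mediating morphism for an arbitrary cone, which is the statement being claimed.
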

\begin{proof}
Let $Z$ be a topological space and let $\mathbf{g}:Z\longrightarrow  (Sh^*(X,Y_{\mu}), (q_{\mu\mu'})_*, M)$ be a morphism in pro-Top. We must show that there is a unique continuous map $\alpha:Z\longrightarrow Sh^*(X,Y)$ such that $\mathbf{q}_*\circ \alpha=\mathbf{g}$ in pro-Top. We know that $g_{\mu}(z)\in Sh^*(X,Y_{\mu})$, for every $z\in Z$ and $\mu\in M$. Suppose $g_{\mu}(z)=<[(g^n_{\mu, z}, \lambda_{\mu, z})]>$ and define $h^z:M\longrightarrow\Lambda$ by $h^z(\mu)= \lambda_{\mu, z}$. We define $\alpha(z)=<[(g^n_{\mu, z}, h^z)]>$. Since $(q_{\mu\mu'})_*\circ g_{\mu'}=g_{\mu}$, so  for every $z\in Z$,  $((q_{\mu\mu'})_*\circ g_{\mu'})(z)=g_{\mu}(z)$. Thus, there is a $\lambda\geq\lambda_{\mu,z},\lambda_{\mu',z}$ and $n\in\Bbb{N}$ such that for every $n'\geq n$, $q_{\mu\mu'}\circ g^{n^{\prime}}_{\mu',z}\circ p_{\lambda_{\mu',z}\lambda}=  g^{n^{\prime}}_{\mu,z}\circ p_{\lambda_{\mu,z}\lambda}$. It follows that $\alpha(z)$ is an $S^*$-morphism. It is clear that $\mathbf{q}_*\circ\alpha=\mathbf{g}$. To complete the proof, we  show that $\alpha$ is continuous. Let $z\in Z$, $\mu\in M$ and $F^*=\alpha(z)\in V^{F^*}_{\mu}$. we have $\alpha^{-1}(V^{F^*}_{\mu})=\{ z'\in Z : \alpha(z')\in V^{F^*}_{\mu}\}=\{z'\in Z : (q_{\mu})_*\circ \alpha(z')=(q_{\mu})_*\circ \alpha(z)\}=\{ z'\in Z : g_{\mu}(z)=g_{\mu}(z')\}=g_{\mu}^{-1}(g_{\mu}(z))$. Since $Sh^*(X, Y_{\mu})$ is discrete, $\{g_{\mu}(z)\}$ is an open subset of $Sh^*(X, Y_{\mu})$ and since $g_{\mu}$ is continuous, we have $g_{\mu}^{-1}(g_{\mu}(z))$ is open subset of $Z$. It follows that $\alpha$ is continuous.
\end{proof}

\begin{corollary}
Let $X$ and $Y$ be two topological spaces. Then $Sh^*(X,Y)$ is a Tychonoff space.
\end{corollary}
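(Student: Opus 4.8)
The plan is to deduce the corollary from Theorem~\ref{inv} together with the fact, already established in Corollary~\ref{Dis}, that each factor $Sh^*(X,Y_\mu)$ is discrete. Recall that being Tychonoff (completely regular and Hausdorff, i.e.\ $T_{3\frac12}$) is inherited by subspaces and preserved under arbitrary products. Any discrete space is trivially Tychonoff, so the product $\prod_{\mu\in M} Sh^*(X,Y_\mu)$ is Tychonoff. Theorem~\ref{inv} identifies $Sh^*(X,Y)$ with the inverse limit $\varprojlim \mathbf{Sh^*}(X,Y)$ in Top, and an inverse limit of an inverse system is, by construction, a subspace of the product of the terms of the system (the subspace cut out by the bonding-map compatibility conditions). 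Hence $Sh^*(X,Y)$ embeds as a subspace of a Tychonoff space, and is therefore itself Tychonoff.

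Concretely, I would first recall/state that the topological product $P=\prod_{\mu\in M} Sh^*(X,Y_\mu)$ carries the product topology, that each coordinate space is discrete by Corollary~\ref{Dis}, and that discrete spaces are Tychonoff; since the Tychonoff property passes to products, $P$ is Tychonoff. Second, I would invoke the standard description of a limit in Top: the map $\mathbf{q}_* : Sh^*(X,Y)\to \mathbf{Sh^*}(X,Y)$ being an inverse limit means $Sh^*(X,Y)$ is homeomorphic to $\{(F^*_\mu)_\mu\in P : (q_{\mu\mu'})_*(F^*_{\mu'})=F^*_\mu \text{ whenever } \mu\le\mu'\}$ with the subspace topology from $P$; equivalently, the map $F^*\mapsto ((q_\mu)_*(F^*))_{\mu\in M}$ is an embedding of $Sh^*(X,Y)$ into $P$. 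Third, since a subspace of a Tychonoff space is Tychonoff, I conclude $Sh^*(X,Y)$ is Tychonoff.

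There is essentially no real obstacle here; the only point requiring a modicum of care is to be sure the topology on $Sh^*(X,Y)$ coming from $T_\mathbf{p}$ (and shown independent of the expansion in Proposition~\ref{To}) genuinely coincides with the subspace topology inherited from the product $P$ via $\mathbf{q}_*$. But that is exactly the content of Theorem~\ref{inv}: the universal property there forces the topology on $Sh^*(X,Y)$ to be the initial topology with respect to the family $\{(q_\mu)_*\}_{\mu\in M}$, which is precisely the subspace topology from the product. One should also note a trivial degenerate case: if $Sh^*(X,Y)=\varnothing$ (which cannot happen since $Sh^*$ always contains, e.g., images of constant maps, but in any event) the empty space is Tychonoff. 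So the proof is short: Tychonoff is productive and hereditary, each factor is discrete hence Tychonoff, and $Sh^*(X,Y)$ is a subspace of the product by Theorem~\ref{inv}.

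\begin{proof}
By Corollary~\ref{Dis}, for each $\mu\in M$ the space $Sh^*(X,Y_\mu)$ is discrete, hence Tychonoff. The Tychonoff property is preserved by arbitrary topological products, so $P=\prod_{\mu\in M} Sh^*(X,Y_\mu)$ is a Tychonoff space. By Theorem~\ref{inv}, $\mathbf{q}_*:Sh^*(X,Y)\longrightarrow \mathbf{Sh^*}(X,Y)$ is an inverse limit in Top; consequently the canonical map $Sh^*(X,Y)\longrightarrow P$, $F^*\mapsto\big((q_\mu)_*(F^*)\big)_{\mu\in M}$, is an embedding onto the subspace $\{(F^*_\mu)_\mu\in P : (q_{\mu\mu'})_*(F^*_{\mu'})=F^*_\mu \text{ for all } \mu\le\mu'\}$. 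Since every subspace of a Tychonoff space is Tychonoff, $Sh^*(X,Y)$ is a Tychonoff space.
\end{proof}
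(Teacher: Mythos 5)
Your proof is correct and is exactly the argument the paper intends: the corollary is stated as an immediate consequence of Theorem~\ref{inv}, realizing $Sh^*(X,Y)$ as (homeomorphic to) the standard inverse limit, i.e.\ a subspace of a product of the discrete spaces $Sh^*(X,Y_\mu)$, and invoking that the Tychonoff property is productive and hereditary. No discrepancy with the paper's approach.
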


% We  can 
Suppose that $(M, \leq)$ is a directed set. From \cite{CM2}, we denote by $L(M)$ the set of all lower classes in $M$ ordered by inclusion, in which $\Delta \subseteq M$ is called a lower class if for every $\delta\in \Delta$ and $\mu\in M$ with $\mu \leq \delta$, then $\mu\in \Delta$. 
%Consider the empty set $\emptyset$ as a lower class. 
Moreover, 
for any two lower classes $\Delta, \Delta^{\prime}\in L(M)$, we say that $\Delta\leq \Delta^{\prime}$ if and only if $\Delta\supset\Delta^{\prime}$. Then $(L(M), \leq)$ is a partially ordered set with the least element $M$ which is denoted by $0$. Furthermore, $L(M)^*=L(M)-0$ is downward directed (see \cite[proposition 2.1]{CM2}).

Let $X$ be a set and $(\Gamma,\leq)$ be a partial ordered set  with a least element $0$. Recall from \cite{UH} that an ultrametric on $X$ is a map $d:X\times X\rightarrow \Gamma$ such that for all $x,y\in X$ and $\gamma\in \Gamma$, the following hold:
\begin{enumerate}
\item[$1)$]
$d(x,y)=0 \Longleftrightarrow x=y$.
\item[$2)$]
$d(x,y)=d(y,x)$.
\item[$3)$]
if $d(x,y)\leq \gamma$ and $d(y,z)\leq \gamma$, then $d(x,z)\leq \gamma$.
\end{enumerate}

Now, using the same idea as in \cite{CM2}, we can prove the following theorem:
\begin{theorem}
Let $X$ and $Y$ be topological spaces. Assume 
%$\mathbf{p} : X\rightarrow\mathbf{X}=(X_{\lambda},p_{\lambda\lambda'}, \Lambda)$ and
$\mathbf{q}:Y\longrightarrow \mathbf{Y}=(Y_{\mu}, q_{\mu\mu'},M)$ is an   HPol-expansion of
 %$X$ and 
 $Y$. For  every $F^*,G^*\in Sh^*(X,Y)$ take
\[d(F^*,G^*)=\{\mu\in M:q_{\mu}\circ F^*=q_{\mu}\circ G^*\}.\]
Then we have an ultrametric $d:Sh^*(X,Y)\times Sh^*(X,Y)\rightarrow (L(M),\leq)$.
\end{theorem}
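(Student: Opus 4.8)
The plan is to verify the three ultrametric axioms for the set-valued map $d(F^*,G^*)=\{\mu\in M:q_{\mu}\circ F^*=q_{\mu}\circ G^*\}$, after first checking that $d$ does land in $L(M)$, i.e.\ that $d(F^*,G^*)$ is genuinely a lower class. For the latter: if $\mu'\in d(F^*,G^*)$ and $\mu\leq\mu'$, then $q_{\mu}=q_{\mu\mu'}\circ q_{\mu'}$ (more precisely, the bonding composite holds up to the pro$^*$-relation, which is all we need since $q_{\mu}\circ F^*$ and $q_{\mu}\circ G^*$ are coarse shape morphisms), so $q_{\mu}\circ F^*=q_{\mu\mu'}\circ q_{\mu'}\circ F^*=q_{\mu\mu'}\circ q_{\mu'}\circ G^*=q_{\mu}\circ G^*$, whence $\mu\in d(F^*,G^*)$. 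Thus $d(F^*,G^*)\in L(M)$.

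Next I would run through the axioms. Axiom $2)$ (symmetry) is immediate from the symmetry of the defining equality. For axiom $1)$: recalling that the least element of $(L(M),\leq)$ is $M$ itself (with the reversed order $\Delta\leq\Delta'\iff\Delta\supseteq\Delta'$), the condition $d(F^*,G^*)=0$ means $d(F^*,G^*)=M$, i.e.\ $q_{\mu}\circ F^*=q_{\mu}\circ G^*$ for every $\mu\in M$. By Theorem~\ref{inv}, $\mathbf{q}_*:Sh^*(X,Y)\to\mathbf{Sh^*}(X,Y)$ is an inverse limit, in particular a monomorphism on the level of points: the family $\{(q_{\mu})_*\}_{\mu\in M}$ separates points of $Sh^*(X,Y)$. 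Hence $(q_{\mu})_*(F^*)=(q_{\mu})_*(G^*)$ for all $\mu$ forces $F^*=G^*$. The converse is trivial. For axiom $3)$, suppose $d(F^*,G^*)\leq\Delta$ and $d(G^*,H^*)\leq\Delta$ for some $\Delta\in L(M)$; by definition of the order this says $\Delta\subseteq d(F^*,G^*)$ and $\Delta\subseteq d(G^*,H^*)$, so $\Delta\subseteq d(F^*,G^*)\cap d(G^*,H^*)$. For any $\mu$ in this intersection we have $q_{\mu}\circ F^*=q_{\mu}\circ G^*=q_{\mu}\circ H^*$, so $\mu\in d(F^*,H^*)$; thus $\Delta\subseteq d(F^*,H^*)$, i.e.\ $d(F^*,H^*)\leq\Delta$, which is axiom $3)$.

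I do not expect any of these steps to be genuinely hard: the argument is a direct translation of the corresponding verification in \cite{CM2} for the shape case, with $Sh$ replaced by $Sh^*$ throughout. The one point that requires a little care — and which I would flag as the main thing to get right — is the separation-of-points claim underlying axiom $1)$: one must invoke Theorem~\ref{inv} (or, equivalently, re-derive from scratch that two coarse shape morphisms $X\to Y$ agreeing after composition with every $q_{\mu}$ are equal, which is essentially the uniqueness clause of the universal property of the HPol-expansion $\mathbf{q}$ transported through the functor $Sh^*(X,-)$). A secondary subtlety is making sure the identity $q_{\mu}=q_{\mu\mu'}\circ q_{\mu'}$ is used at the level of coarse shape morphisms rather than of maps of spaces; since $\mathbf{q}$ is a morphism of inverse systems this commutativity holds in pro-$HPol$ and hence descends to $Sh^*$, so composing on the right with $F^*$ and $G^*$ is legitimate. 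Everything else is formal.
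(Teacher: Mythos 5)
Your proof is correct and follows the same route as the paper: verify the lower-class property via $q_{\mu'}=q_{\mu'\mu}\circ q_{\mu}$, and then check the three axioms directly. The paper actually only writes out the lower-class step and the statement of axiom 1), dismissing the rest as easy, so your version merely supplies the details it omits (in particular the separation-of-points justification for axiom 1), which you correctly ground in the uniqueness clause of the expansion, equivalently Theorem~\ref{inv}).
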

\begin{proof}
First, we show that $d(F^*,G^*)$ is a lower class. Suppose $\mu\in d(F^*,G^*)$ and $\mu^{\prime}\in M$ such that $\mu^{\prime}\leq \mu$. Then  $q_{\mu^{\prime}}=q_{\mu^{\prime}\mu}q_{\mu}$ and we have 
\[q_{\mu^{\prime}}\circ F^*=q_{\mu^{\prime}\mu}\circ q_{\mu}\circ F^*=q_{\mu^{\prime}\mu}\circ q_{\mu}\circ G^*=q_{\mu^{\prime}}\circ G^*.\]
It follows that $\mu^{\prime}\in d(F^*,G^*)$. Now, let $F^*,G^*\in Sh^*(X,Y)$ 
%given by morphisms $\mathbf{f}^*=[(f,f_{\mu}^{n})]$ and $\mathbf{g}^*=[(g,g_{\mu}^{n})]:(\mathbf{X},\mathbf{x})\rightarrow (\mathbf{Y},\mathbf{y})$ in pro^*$-HPol_{*}$, respectively  
such that $d(F^*,G^*)=0$. It is  equivalent  to  $q_{\mu}\circ F^*=q_{\mu}\circ G^*$, for every $\mu\in M$ or equivalently $F^*=G^*$. Other conditions  can also be proved easily.  
\end{proof}

Let $(M,\leq)$ be a directed set and $(L(M),\leq)$ be the corresponding ordered set of lower  classes in $M$. For every $\mu\in M$, consider $\{\mu^{\prime}\in M:\mu\geq \mu^{\prime}\}$ as the lower class generated by $\mu$, which is denote by $[\mu]$ and define $\phi:(M,\leq)\rightarrow (L(M),\leq)$ that maps $\mu$ to $[\mu]$. If $\mu\geq \mu^{\prime}$, then $[\mu]\leq [\mu^{\prime}]$ and $(\phi(M),\leq)$ is a partial ordered set and also is downward directed in $L(M)$ (see \cite{CM2}). 

 Now, we have:
 \begin{proposition}
Let $X$ and $Y$ be topological spaces. Suppose $\mathbf{q}:Y\longrightarrow \mathbf{Y}=(Y_{\mu}, q_{\mu\mu'},M)$ is an   HPol-expansion of
 $Y$. For every $\mu \in M$ and $F^*\in Sh^*(X,Y)$ take
  \[B_{[\mu]}(F^*)=\{G^*\in Sh^*(X,Y): d(F^*,G^*)\leq [\mu]\}.\]
 Then the family $\{B_{[\mu]}(F^*): F^*\in Sh^*(X,Y), \mu\in M\}$ is a basis for a topology in $Sh^*(X,Y)$. Moreover, this topology is independent of the fixed HPol-expansion of $Y$ and it coinsides with the topology defined previously . 
 \end{proposition}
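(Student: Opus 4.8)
The plan is to verify the three basis axioms directly from the definition of the ultrametric $d$ and then compare the resulting topology with the topology $T_{\mathbf q}$ generated by the sets $V^{F^*}_{\mu}$ of Proposition \ref{To}. First I would observe that $F^*\in B_{[\mu]}(F^*)$ for every $\mu$ and $F^*$, since $d(F^*,F^*)=0\leq[\mu]$; this handles the covering axiom. For the intersection axiom, suppose $H^*\in B_{[\mu_1]}(F^*)\cap B_{[\mu_2]}(G^*)$. As in the proof of Proposition \ref{To}, the strong triangle inequality (condition $3)$ of the ultrametric) forces $B_{[\mu_i]}(F^*)=B_{[\mu_i]}(H^*)$: if $d(F^*,H^*)\leq[\mu_1]$ and $d(H^*,K^*)\leq[\mu_1]$ then $d(F^*,K^*)\leq[\mu_1]$, giving one inclusion, and symmetry gives the other. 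Then I would pick $\mu\in M$ with $\mu\geq\mu_1,\mu_2$ (possible since $M$ is directed), so that $[\mu]\leq[\mu_1]$ and $[\mu]\leq[\mu_2]$ in $L(M)$; hence $B_{[\mu]}(H^*)\subseteq B_{[\mu_1]}(H^*)\cap B_{[\mu_2]}(H^*)=B_{[\mu_1]}(F^*)\cap B_{[\mu_2]}(G^*)$, and $H^*$ lies in this smaller basic set.

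The second claim is that this topology is independent of the chosen HPol-expansion and agrees with $T_{\mathbf q}$. The quickest route is to show the two bases are literally the same sets: I claim $B_{[\mu]}(F^*)=V^{F^*}_{\mu}$. Indeed, by definition $G^*\in B_{[\mu]}(F^*)$ means $d(F^*,G^*)\leq[\mu]$, i.e.\ $[\mu]\subseteq d(F^*,G^*)$ (recall the order on $L(M)$ is reverse inclusion), i.e.\ every $\mu'\leq\mu$ satisfies $q_{\mu'}\circ F^*=q_{\mu'}\circ G^*$. Since $q_{\mu}=q_{\mu'\mu'}$ is among these and $q_{\mu'}=q_{\mu'\mu}q_{\mu}$ for $\mu'\leq\mu$, the condition "$q_{\mu'}\circ F^*=q_{\mu'}\circ G^*$ for all $\mu'\leq\mu$" is equivalent to the single condition $q_{\mu}\circ F^*=q_{\mu}\circ G^*$, which is exactly $G^*\in V^{F^*}_{\mu}$. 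Thus the two bases coincide, so the topologies coincide; and since $T_{\mathbf q}$ was already shown in Proposition \ref{To} to be independent of the HPol-expansion, so is this one.

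I do not expect a serious obstacle here; the only point requiring care is keeping the direction of the order on $L(M)$ straight (the paper sets $\Delta\leq\Delta'$ iff $\Delta\supseteq\Delta'$, so "$d(F^*,G^*)\leq[\mu]$" unwinds to "$[\mu]\subseteq d(F^*,G^*)$", i.e.\ $d$ is \emph{large} when $F^*$ and $G^*$ agree on many indices), and correctly using that a lower class containing $\mu$ automatically contains all $\mu'\leq\mu$ to collapse the family of conditions to the single one defining $V^{F^*}_{\mu}$. Everything else — the covering and intersection axioms, and the independence statement — then follows formally from Proposition \ref{To} and the directedness of $M$.
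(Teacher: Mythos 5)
Your proof is correct, and the verification of the basis axioms (covering via $d(F^*,F^*)=0$, intersection via the strong triangle inequality forcing $B_{[\mu_i]}(F^*)=B_{[\mu_i]}(H^*)$, then directedness of $M$) is exactly the paper's argument. Where you diverge is in the second half: the paper proves independence of the HPol-expansion \emph{directly}, by taking the canonical isomorphism $\mathbf{j}:\mathbf{Y}\rightarrow\mathbf{Y}'$ with index function $\phi$ and chasing the inclusions $B_{[\phi(\nu)]}(G^*)\subseteq B_{[\nu]}(G^*)$ in both directions, and only afterwards asserts (without detail) that $V_{\mu}^{F^*}=B_{[\mu]}(F^*)$. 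You instead prove the identity $B_{[\mu]}(F^*)=V_{\mu}^{F^*}$ carefully --- unwinding the reverse-inclusion order on $L(M)$ and using that the single condition $q_{\mu}\circ F^*=q_{\mu}\circ G^*$ propagates down to all $\mu'\leq\mu$ via $q_{\mu'}=q_{\mu'\mu}q_{\mu}$ --- and then obtain expansion-independence for free from Proposition \ref{To}. This is a legitimate and more economical route: it replaces the paper's second isomorphism-chasing argument with a one-line appeal to a result already proved, and it supplies the detail the paper omits at the point where it matters most (the coincidence of the two topologies). The only blemish is the typo ``$q_{\mu}=q_{\mu'\mu'}$''; what you mean is simply that $\mu$ itself belongs to $[\mu]$, so the condition at $\mu$ is among the family of conditions indexed by $\mu'\leq\mu$.
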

 \begin{proof}
 It is obvious that $F^*\in B_{[\mu]}(F^*)$, for all $\mu\in M$. Suppose $F^*,G^*\in Sh^*(X,Y)$ and $\mu_{1},\mu_{2}\in M$ and $H^*\in B_{[\mu_1]}(F^*)\cap B_{[\mu_2]}(G^*)$. Therefore $d(H^*,F^*)\leq [\mu_1]$ and $d(H^*,G^*)\leq [\mu_2]$.  Let $K^*\in B_{[\mu_1]}(H^*)$, then we have $d(K^*, H^*)\leq [\mu_1]$ and $d(H^*,F^*)\leq [\mu_1]$ and so by the definition of an ultrametric, $d(K^*,F^*)\leq [\mu_1]$. It shows that $K^*\in B_{[\mu_1]}(F^*)$ and $B_{[\mu_1]}(H^*)\subseteq B_{[\mu_1]}(F^*)$. Conversely, we can show that $B_{[\mu_1]}(F^*)\subseteq B_{[\mu_1]}(H^*)$ and hence we have $B_{[\mu_1]}(H^*)=B_{[\mu_1]}(F^*)$. Similarly, we can conclude that $B_{[\mu_2]}(H^*)=B_{[\mu_2]}(G^*)$. Hence, to prove the first assertion, it is enough to consider  $\mu\in M$ such that $\mu\geq \mu_1,\mu_2$, then $[\mu]\leq [\mu_1],[\mu_2]$ and this easily  implies that $H^*\in B_{[\mu]}(H^*)\subseteq B_{[\mu_1]}(H^*)\cap B_{[\mu_2]}(H^*)=B_{[\mu_1]}(F^*)\cap B_{[\mu_2]}(G^*)$. 
 
 Now, suppose that $\mathbf{q'} : Y\rightarrow\mathbf{Y'}=(Y_{\nu},q_{\nu\nu'}, N)$ is another HPol-expansion of $Y$. Then there exists a unique isomorphism $\mathbf{j}:\mathbf{Y}\longrightarrow \mathbf{Y'}$ given by $(j_{\nu}, \phi)$ such that $\mathbf{j}\circ \mathbf{q}=\mathbf{q'}$ (we can assume that $\phi$ is an increasing map). Let $\nu\in N$ and $F^*\in Sh^*(X,Y)$, then $\phi(\nu)\in M$. Given $G^*\in B_{[\nu]}(F^*)$, so by the above argument $B_{[\nu]}(F^*)=B_{[\nu]}(G^*)$. 
 For each $H^*\in B_{[\phi(\nu)]}(G^*)$, we have $d(G^*,H^*)\leq [\phi(\nu)]$, i.e., if $\mu \leq \phi(\nu)$, then $q_{\mu}\circ G^*=q_{\mu}\circ H^*$.  Given $\nu^{\prime}\in N$ such that $\nu^{\prime}\leq \nu$, then $\phi(\nu^{\prime})\leq \phi(\nu)$ and so  $q_{\phi(\nu^{\prime})}\circ G^*=q_{\phi(\nu^{\prime})}\circ H^*$.  It implies that $q'_{\nu^{\prime}}\circ G^*= q'_{\nu^{\prime}}\circ H^*$ and thus  $H^*\in B_{[\nu]}(G^*)$. Therefore $G^*\in B_{[\phi(\nu)]}(G^*)\subseteq B_{[\nu]}(G^*)=B_{[\nu]}(F^*)$ and it follows that the topology corresponding to HPol-expansion $\mathbf{q}$ is stronger than the topology corresponding to HPol-expansion $\mathbf{q'}$. Similarly, we can prove that the converse is true.  
 
 Finally, we want to show that the topology induced by the ultrametric $d$ coinsides with the topology $T_{q}$ studied in Proposition \ref{To}. It is easy to see that $ V_{\mu}^{F^*}=B_{[\mu]}(F^*)$, for every  $\mu\in M$ and $F^*\in Sh^*(X,Y)$
  which completes the proof. 
 \end{proof}
\section{The topological coarse shape homotopy groups}\label{S}
Let $X$ be a topological space and $\mathbf{p} : X\rightarrow\mathbf{X}=(X_{\lambda},p_{\lambda\lambda'}, \Lambda)$ be an HPol-expansion of $X$. We know that the $k$th coarse shape group $\check{\pi}_k^{*}(X,x)$, $k \in \mathbb{N}$, is the set  of all coarse shape morphisms $F^*:(S^{k},*)\rightarrow (X,x)$ with the following binary operation
\[F^*+G^*=< \mathbf{f}^*>+<\mathbf{g}^*>=<\mathbf{f}^*+\mathbf{g}^*>=<[(f_{\lambda}^{n})]+[(g_{\lambda}^{n})]>=<[(f_{\lambda}^{n}+g_{\lambda}^{n})]>,\]
where coarse shape morphisms $F^*$ and $G^*$ are represented by morphisms $\mathbf{f}^*=[(f,f_{\lambda}^{n})]$ and $\mathbf{g}^*=[(g,g_{\lambda}^{n})]:(S^k,*)\rightarrow (\mathbf{X},\mathbf{x})$ in pro$^*$-HPol$_{*}$, respectively (see \cite{Bi}).

Now, we show that $\check{\pi}_k^{*}(X,x)=Sh^*((S^k,*),(X,x))$ with the above topology is a topological group which is denoted by $\check{\pi}_k^{*{^{top}}}(X,x)$, for all $k\in \mathbb{N}$.
\begin{theorem}\label{quasi}
Let $(X,x)$ be a pointed topological space. Then $\check{\pi}_k^{*{^{top}}}(X,x)$ is a topological group, for all $k\in \Bbb{N}$.
\end{theorem}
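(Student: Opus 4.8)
The plan is to show that the two group operations—addition and inversion—are continuous with respect to the topology $T_{\mathbf p}$ described in Proposition~\ref{To}, using the fact (Corollary~\ref{Dis}) that each $\check\pi_k^*(X_\lambda,x_\lambda)=Sh^*((S^k,*),(X_\lambda,x_\lambda))$ is \emph{discrete}, since $X_\lambda$ is a polyhedron. The key structural observation is that the operations in $\check\pi_k^*(X,x)$ are defined levelwise: if $F^*=<[(f,f_\lambda^n)]>$ and $G^*=<[(g,g_\lambda^n)]>$, then $p_\lambda\circ(F^*+G^*)$ is represented by $(f_\lambda^n+g_\lambda^n)$, i.e.\ it depends only on $p_\lambda\circ F^*$ and $p_\lambda\circ G^*$ as elements of the discrete group $\check\pi_k^*(X_\lambda,x_\lambda)$. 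This is exactly the compatibility one needs to push the argument through the basic open sets $V^{F^*}_\lambda$.

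First I would fix $F_0^*,G_0^*\in\check\pi_k^*(X,x)$ and a basic neighbourhood $V^{F_0^*+G_0^*}_\lambda$ of their sum. I claim that the addition map sends $V^{F_0^*}_\lambda\times V^{G_0^*}_\lambda$ into $V^{F_0^*+G_0^*}_\lambda$. Indeed, for $F^*\in V^{F_0^*}_\lambda$ and $G^*\in V^{G_0^*}_\lambda$ we have $p_\lambda\circ F^*=p_\lambda\circ F_0^*$ and $p_\lambda\circ G^*=p_\lambda\circ G_0^*$ in $\check\pi_k^*(X_\lambda,x_\lambda)$; since the level-$\lambda$ component of a sum is the sum of the level-$\lambda$ components (this is where one unwinds the definition $<[(f_\lambda^n)]>+<[(g_\lambda^n)]>=<[(f_\lambda^n+g_\lambda^n)]>$ together with the fact that $p_\lambda\circ(-)$ is just "take the $\lambda$-th coordinate"), we get $p_\lambda\circ(F^*+G^*)=p_\lambda\circ(F_0^*+G_0^*)$, hence $F^*+G^*\in V^{F_0^*+G_0^*}_\lambda$. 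This proves continuity of addition at $(F_0^*,G_0^*)$; since the point was arbitrary, addition is continuous on all of $\check\pi_k^{*{^{top}}}(X,x)\times\check\pi_k^{*{^{top}}}(X,x)$.

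For inversion the argument is entirely parallel: given $F_0^*$ and a basic neighbourhood $V^{-F_0^*}_\lambda$ of $-F_0^*$, I would check that the map $F^*\mapsto -F^*$ carries $V^{F_0^*}_\lambda$ into $V^{-F_0^*}_\lambda$, again because $p_\lambda\circ(-F^*)=-(p_\lambda\circ F^*)$ in the (discrete) group $\check\pi_k^*(X_\lambda,x_\lambda)$, and $p_\lambda\circ F^*=p_\lambda\circ F_0^*$ forces $p_\lambda\circ(-F^*)=p_\lambda\circ(-F_0^*)$. One could alternatively combine the two maps and verify directly that $(F^*,G^*)\mapsto F^*-G^*$ is continuous, which is the standard single criterion for a topological group; the bookkeeping is the same. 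By Proposition~\ref{To} the topology does not depend on the chosen HPol-expansion, so the statement is well posed.

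The main obstacle, and the only step requiring genuine care, is the bookkeeping in the claim that the level-$\lambda$ projection of a coarse shape sum is the sum of the level-$\lambda$ projections—more precisely, that $p_\lambda\circ(F^*+G^*)$ is a well-defined element of the discrete set $\check\pi_k^*(X_\lambda,x_\lambda)$ depending only on $p_\lambda\circ F^*$ and $p_\lambda\circ G^*$, and not on the chosen representatives $(f,f_\lambda^n)$, $(g,g_\lambda^n)$ or on the index functions $f,g$. This is essentially a compatibility lemma for the addition in $\check\pi_k^*$ from \cite{Bi} combined with the definition of the bonding maps $(q_{\mu\mu'})_*$ and of $\mathbf p_*$; once it is in place, everything else is the routine "pull back a basic open set" computation sketched above. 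I expect no difficulty beyond carefully matching index functions when representatives of $F^*$ and $G^*$ use different ones, which is handled by passing to a common refinement $\lambda'\ge f(\mu),g(\mu)$ exactly as in the definition of the operation.
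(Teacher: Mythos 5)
Your proposal is correct and follows essentially the same route as the paper: both arguments show that addition maps $V^{F^*}_{\lambda}\times V^{G^*}_{\lambda}$ into $V^{F^*+G^*}_{\lambda}$ and inversion maps $V^{F^*}_{\lambda}$ into $V^{F^{*^{-1}}}_{\lambda}$, relying on the levelwise compatibility $p_{\lambda}\circ(F^*+G^*)=(p_{\lambda}\circ F^*)+(p_{\lambda}\circ G^*)$ and its analogue for inverses. The paper merely makes the inversion step more explicit by unwinding representatives $(f,f^n_{\lambda})$ and invoking the homotopies $g^n_{\lambda}\simeq f^n_{\lambda}$ rel $\{*\}$ from the coarse shape literature, which is exactly the ``bookkeeping'' step you flag as the only point requiring care.
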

\begin{proof}
First, we show that $\phi:\check{\pi}_k^{*^{top}}(X,x)\rightarrow \check{\pi}_k^{*^{top}}(X,x)$ given by $\phi(F^*)=F^{*^{-1}}$ is continuous, where  $F^*$ and  $F^{*^{-1}}:(S^k,*)\rightarrow(X,x)$ are represented by $\mathbf{f}^{*}=(f,f^n_{\lambda})$ and  $\mathbf{f}^{*^{-1}}=(f,f^{n^{-1}}_{\lambda}):(S^k,*)\rightarrow(\mathbf{X},\mathbf{x})$, respectively and $f^{n^{-1}}_{\lambda}:(S^k,*)\rightarrow(X_{\lambda},x_{\lambda})$ is the inverse loop of $f^n_{\lambda}$. Let $V_{\lambda}^{F^{*^{-1}}}$ be an open neighbourhood of $F^{*^{-1}}$ in $\check{\pi}_k^{*^{top}}(X,x)$. We know that for any $G^*=<[(g,g^n_{\lambda})]>\in V_{\lambda}^{F^*}$, $p_{\lambda}\circ G^*=p_{\lambda}\circ F^*$.  So there is an $n'\in\Bbb{N}$ such that for any $n\geq n'$,  $g^n_{\lambda}\simeq f^n_{\lambda}$ rel $\{*\}$ by \cite[Claim 1 and Claim 2]{UB}. Then for any $n\geq n'$, $g_{\lambda}^{n^{-1}}\simeq f_{\lambda}^{n^{-1}}$ rel $\{*\}$ and so $p_{\lambda}\circ G^{*^{-1}}=p_{\lambda}\circ F^{*^{-1}}$. Thus  $\phi(G^*)\in  V_{\lambda}^{F^{*^{-1}}}$. Therefore, the map $\phi$ is continuous.

Second, we show that the map $m:\check{\pi}_k^{*^{top}}(X,x)\times \check{\pi}_k^{*^{top}}(X,x)\rightarrow \check{\pi}_k^{*^{top}}(X,x)$ given by $m(F^*,G^*)=F^*+G^*$ is continuous, where $F^*+G^*$ is the coarse shape morphism represented by $\mathbf{f^*}+\mathbf{g^*}=(f, f^n_{\lambda}+g^n_{\lambda}):(S^k,*)\rightarrow(\mathbf{X},\mathbf{x})$ and $f^n_{\lambda}+g^n_{\lambda}$ is the concatenation of paths. Let $V_{\lambda}^{F^*+G^*}$ be an open neighbourhood of $F^*+G^*$ in $\check{\pi}_k^{*^{top}}(X,x)$. For any $(K^*,H^*)\in V_{\lambda}^{F^*}\times V_{\lambda}^{G^*}$, we have $p_{\lambda}\circ (K^*+H^*)=(p_{\lambda}\circ K^*)+ (p_{\lambda}\circ H^*)=(p_{\lambda}\circ F^*)+ (p_{\lambda}\circ G^*)=p_{\lambda}\circ ( F^*+G^*)$. Hence $m(K^*,H^*)\in  V_{\lambda}^{F^*+G^*}$ and so $m$ is continuous.
\end{proof}
Using Corollary \ref{C}, we can conclude the following results:
\begin{corollary}
If $F^*:(X,x)\rightarrow (Y,y)$ is a coarse shape morphism, then $F^*_2:\check{\pi}_k^{*^{top}}(X,x)\rightarrow \check{\pi}_k^{*^{top}}(Y,y)$ is continuous.
\end{corollary}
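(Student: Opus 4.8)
The plan is to obtain this at once from Corollary~\ref{C}(i). By definition, $\check{\pi}_k^{*^{top}}(X,x)=Sh^*((S^k,*),(X,x))$ carries exactly the topology $T_{\mathbf{p}}$ of Proposition~\ref{To}, and likewise for $(Y,y)$; moreover the map $F^*_2$ in the statement is precisely the map $F^*_2:Sh^*((S^k,*),(X,x))\rightarrow Sh^*((S^k,*),(Y,y))$, $G^*\mapsto F^*\circ G^*$, of Corollary~\ref{C} specialised to $Z=(S^k,*)$. Everything in Section~3 --- the sets $V^{F^*}_{\lambda}$, the topology $T_{\mathbf{p}}$, continuity of $\Omega$, and Corollary~\ref{C} --- was formulated for the pair (HTop, HPol), but it transfers verbatim to pointed spaces, since HPol$_*$ is pro-reflective (dense) in HTop$_*$ as well by \cite[Theorem 1.4.2]{MS}; in particular a (pointed) HPol$_*$-expansion $\mathbf{p}:(X,x)\rightarrow\mathbf{X}$ exists, which is already used implicitly in Section~\ref{S}. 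Granting this, Corollary~\ref{C}(i) with $Z=(S^k,*)$ gives directly that $F^*_2$ is continuous, as claimed.

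For completeness I would also record the short direct verification, which mirrors the proof of continuity of $\Omega$. Fix HPol$_*$-expansions $\mathbf{p}:(X,x)\rightarrow\mathbf{X}=(X_{\lambda},p_{\lambda\lambda'},\Lambda)$ and $\mathbf{q}:(Y,y)\rightarrow\mathbf{Y}=(Y_{\mu},q_{\mu\mu'},M)$, and let $(f,f^n_{\mu})$ represent $F^*$. Given $G^*\in\check{\pi}_k^{*^{top}}(X,x)$ and a basic open neighbourhood $V^{F^*\circ G^*}_{\mu}$ of $F^*\circ G^*$, one has $q_{\mu}\circ F^*=\alpha\circ p_{f(\mu)}$, where $\alpha:X_{f(\mu)}\rightarrow Y_{\mu}$ is the $S^*$-morphism given by $(f^n_{\mu},\beta_{\mu})$ with $\beta_{\mu}:\{\mu\}\rightarrow\{f(\mu)\}$ --- exactly as in that proof. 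Then for every $K^*\in V^{G^*}_{f(\mu)}$ we get $q_{\mu}\circ(F^*\circ K^*)=\alpha\circ p_{f(\mu)}\circ K^*=\alpha\circ p_{f(\mu)}\circ G^*=q_{\mu}\circ(F^*\circ G^*)$, so that $F^*\circ K^*\in V^{F^*\circ G^*}_{\mu}$; hence $F^*_2(V^{G^*}_{f(\mu)})\subseteq V^{F^*\circ G^*}_{\mu}$ and $F^*_2$ is continuous at $G^*$. Since $G^*$ was arbitrary, this proves the corollary.

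I do not expect any real obstacle: the statement is a direct specialisation of results already proved in Section~3. The only point requiring a little care is purely organisational --- checking that base points are preserved throughout (that is, working in HPol$_*$ rather than HPol) and that the neighbourhoods $V^{F^*}_{\lambda}$ do describe the topological group $\check{\pi}_k^{*^{top}}$ --- both of which are already built into the definition of $\check{\pi}_k^{*^{top}}(X,x)$ and into Theorem~\ref{quasi}.
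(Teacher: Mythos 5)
Your proposal is correct and follows the paper's own route: the paper derives this corollary directly from Corollary~\ref{C}(i) by taking $Z=(S^k,*)$, exactly as you do, and your supplementary direct verification is just the specialisation of the proof that $\Omega$ is continuous. Nothing is missing; the remark about working in the pointed category HPol$_*$ is a sensible bookkeeping point that the paper leaves implicit.
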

\begin{corollary}\label{A}
If $(X,x)$ and $(Y,y)$ are two pointed topological spaces and $Sh^*(X,x)=Sh^*(Y,y)$, then $\check{\pi}_k^{*^{top}}(X,x)\cong\check{\pi}_k^{*^{top}}(Y,y)$ as topological groups.
\end{corollary}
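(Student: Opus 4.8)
The plan is to deduce this corollary directly from Corollary~\ref{C}(iii) together with the fact that $\check{\pi}_k^{*^{top}}(X,x)=Sh^*((S^k,*),(X,x))$ carries a topological group structure (Theorem~\ref{quasi}). First I would observe that the hypothesis $Sh^*(X,x)=Sh^*(Y,y)$ means precisely that there are coarse shape morphisms $F^*:(X,x)\to(Y,y)$ and $G^*:(Y,y)\to(X,x)$ with $G^*\circ F^*=Id^*_{(X,x)}$ and $F^*\circ G^*=Id^*_{(Y,y)}$. Applying Corollary~\ref{C}(iii) with $Z=(S^k,*)$, the induced maps $F^*_2:Sh^*((S^k,*),(X,x))\to Sh^*((S^k,*),(Y,y))$ and $G^*_2:Sh^*((S^k,*),(Y,y))\to Sh^*((S^k,*),(X,x))$ are continuous, and the functoriality statements $(G^*\circ F^*)_2=G^*_2\circ F^*_2$ and $Id^*_2=\mathrm{id}$ give that $G^*_2\circ F^*_2$ and $F^*_2\circ G^*_2$ are the respective identity maps. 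Hence $F^*_2$ is a homeomorphism $\check{\pi}_k^{*^{top}}(X,x)\to\check{\pi}_k^{*^{top}}(Y,y)$.

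It then remains to check that $F^*_2$ is a group homomorphism, so that it is an isomorphism of topological groups. Here I would unwind the definition of the group operation on $\check{\pi}_k^{*}$: for $H^*,K^*:(S^k,*)\to(X,x)$ represented by $\mathbf{h}^*=(h,h^n_{\lambda})$ and $\mathbf{k}^*=(k,k^n_{\lambda})$, the sum $H^*+K^*$ is represented by the levelwise concatenation $(h,h^n_{\lambda}+k^n_{\lambda})$ (using the comground pinch map on $S^k$). Writing $F^*$ at the level of $S^*$-morphisms between HPol$_*$-expansions of $(X,x)$ and $(Y,y)$, composition with $F^*$ is computed levelwise by post-composing with the component maps of $F^*$; since post-composition with a pointed map commutes up to pointed homotopy with the pinch-induced concatenation of loops, we get $F^*_2(H^*+K^*)=F^*_2(H^*)+F^*_2(K^*)$. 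This is the same standard compatibility that underlies the proof that $\check{\pi}_k^{*}$ is a functor to groups, so I would cite \cite{Bi,UB} for the underlying pro$^*$-HPol$_*$ computation rather than redo it.

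The only genuinely delicate point is the interchange of concatenation and post-composition at the level of $S^*$-morphisms, because an $S^*$-morphism carries a whole sequence $(f^n_\mu)_{n\in\mathbb N}$ of component maps and the defining commutativity relations hold only for $n$ large; so one must verify that the pointed homotopies witnessing $F^*_2(H^*+K^*)=F^*_2(H^*)+F^*_2(K^*)$ can be chosen coherently in $n$. This is exactly the bookkeeping that is already handled in the construction of the coarse shape group operation in \cite{Bi}, and the same argument applies verbatim after post-composing with a fixed $S^*$-morphism, so no new difficulty arises. Everything else is formal: the topological group axioms transport along the homeomorphism $F^*_2$, and by Theorem~\ref{quasi} both sides are already topological groups, so $F^*_2$ is an isomorphism of topological groups, as claimed.
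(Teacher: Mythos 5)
Your proposal is correct and follows essentially the same route as the paper, which derives Corollary~\ref{A} directly from Corollary~\ref{C}(iii) (applied with $Z=(S^k,*)$) together with the fact that post-composition with a coarse shape morphism induces a group homomorphism on $\check{\pi}_k^{*}$, the latter being the standard functoriality established in the construction of the coarse shape groups. Your extra remark about choosing the homotopies coherently in $n$ is exactly the bookkeeping already handled in \cite{Bi}, so nothing further is needed.
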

\begin{corollary}
For any $k\in \Bbb{N}$, $\check{\pi}_k^{*^{top}}(-)$ is a functor from the pointed coarse shape category of spaces to the category of Hausdorff topological groups.
\end{corollary}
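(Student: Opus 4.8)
The statement bundles three verifications, and the plan is to dispatch each using the results already in hand. First, the target of the object assignment: for a pointed space $(X,x)$, Theorem \ref{quasi} gives that $\check{\pi}_k^{*^{top}}(X,x)$ is a topological group, and the corollary that $Sh^*(X,Y)$ is a Tychonoff space applies with $Y=X$ and domain $(S^k,*)$, so $\check{\pi}_k^{*^{top}}(X,x)=Sh^*((S^k,*),(X,x))$ is in particular Hausdorff. (If one prefers a direct argument, $F^*\neq G^*$ forces $p_\lambda\circ F^*\neq p_\lambda\circ G^*$ for some $\lambda$, whence the basic neighbourhoods $V^{F^*}_\lambda$ and $V^{G^*}_\lambda$ are disjoint.) Thus the object assignment lands in the category of Hausdorff topological groups.

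Second, for a coarse shape morphism $F^*\colon (X,x)\to (Y,y)$ we set $\check{\pi}_k^{*^{top}}(F^*):=F^*_2$. Its continuity as a map $\check{\pi}_k^{*^{top}}(X,x)\to\check{\pi}_k^{*^{top}}(Y,y)$ is the corollary above asserting continuity of $F^*_2$ (itself a case of Corollary \ref{C}(i)), so the only thing still to check is that $F^*_2$ is a group homomorphism. I would argue this on representatives: write $F^*=\langle\mathbf{f}^*\rangle$ with $\mathbf{f}^*=[(f,f^n_\mu)]$ and let $G^*_1,G^*_2\in\check{\pi}_k^{*^{top}}(X,x)$ be represented level-wise by $(g^n_{1,\lambda})$ and $(g^n_{2,\lambda})$. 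The key point is that post-composing a map of polyhedra with the concatenation of two $k$-loops is, strictly, the concatenation of the two post-compositions; applying this index-by-index to the $S^*$-morphism data $f^n_\mu$ against representatives of $G^*_1+G^*_2$ gives $\mathbf{f}^*\circ(\mathbf{g}^*_1+\mathbf{g}^*_2)=(\mathbf{f}^*\circ\mathbf{g}^*_1)+(\mathbf{f}^*\circ\mathbf{g}^*_2)$ in pro$^*$-HPol$_{*}$, hence $F^*_2(G^*_1+G^*_2)=F^*_2(G^*_1)+F^*_2(G^*_2)$. No homotopies are needed here, since each identity is strict after composing with the appropriate structure maps; the only care required is the bookkeeping of the index function $f$ and the $n$-dependence of the $S^*$-morphism.

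Finally, functoriality: Corollary \ref{C}(i) gives $(G^*\circ F^*)_2=G^*_2\circ F^*_2$ and that $\mathrm{Id}^*_2$ is the identity map, which are precisely the two functor axioms. Combining the three paragraphs, $\check{\pi}_k^{*^{top}}(-)$ is a functor from the pointed coarse shape category of spaces to the category of Hausdorff topological groups for every $k\in\mathbb{N}$. The argument is in essence an assembly of earlier results; the one genuinely new --- though very short --- step is the homomorphism property of $F^*_2$, and that is where I expect the only real (and mild) obstacle to lie.
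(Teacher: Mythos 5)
Your proposal is correct and matches the paper's intent: the paper states this corollary without proof, treating it as an immediate assembly of Theorem \ref{quasi}, Corollary \ref{C}, and the Hausdorff/Tychonoff observations, which is exactly what you carry out. The one step you rightly flag as not purely formal --- that $F^*_2$ is a homomorphism because post-composition distributes strictly over concatenation of the level maps $g^n_\lambda$ --- is handled correctly and is the standard fact underlying the induced homomorphisms of coarse shape groups.
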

\begin{corollary}\label{Bra}
Let $X$ be a topological space and $\mathbf{p} : X\rightarrow\mathbf{X}=(X_{\lambda},p_{\lambda\lambda'}, \Lambda)$ be an HPol-expansion of $X$. 
By Theorem \ref{inv}, we know that  
%$Sh^*(X,Y)=\displaystyle{\lim_{\leftarrow} Sh^*(X,Y_{\mu})}$ and hence  
$\check{\pi}_k^{*^{top}}(X,x)\cong\displaystyle{\lim_{\leftarrow}\check{\pi}_k^{*^{top}}(X_{\lambda},x_{\lambda})}$ as topological groups, for all $k\in \Bbb{N}$. Since every $\check{\pi}_k^{*^{top}}(X_{\lambda},x_{\lambda})$ is discrete and Hausdorff,   $\check{\pi}_k^{*^{top}}(X,x)$ is  prodiscrete and Hausdorff, for every topological space $(X,x)$.
\end{corollary}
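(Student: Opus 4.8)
The plan is to obtain this as a bookkeeping consequence of Theorem \ref{inv}, Theorem \ref{quasi} and Corollary \ref{Dis}, the one point requiring care being the passage from the Top-level inverse limit of Theorem \ref{inv} to an inverse limit in the category of topological groups. First I would apply Theorem \ref{inv} with the first space taken to be $(S^k,*)$ and the second space taken to be $(X,x)$, using the given HPol-expansion $\mathbf{p}:X\to\mathbf{X}=(X_\lambda,p_{\lambda\lambda'},\Lambda)$: this yields that the canonical map $\mathbf{p}_*:\check{\pi}_k^{*^{top}}(X,x)=Sh^*((S^k,*),(X,x))\longrightarrow\big(Sh^*((S^k,*),(X_\lambda,x_\lambda)),(p_{\lambda\lambda'})_*,\Lambda\big)$ is an inverse limit in Top; in particular $\mathbf{p}_*$ is a homeomorphism of $\check{\pi}_k^{*^{top}}(X,x)$ onto the subspace $\varprojlim_\lambda Sh^*((S^k,*),(X_\lambda,x_\lambda))$ of the product $\prod_\lambda Sh^*((S^k,*),(X_\lambda,x_\lambda))$.

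Next I would note that this inverse system actually lives in the category of topological groups: by Theorem \ref{quasi} every term $Sh^*((S^k,*),(X_\lambda,x_\lambda))=\check{\pi}_k^{*^{top}}(X_\lambda,x_\lambda)$ is a topological group, and by Corollary \ref{C} together with the definition of the coarse shape group operation (which, as in the proof of Theorem \ref{quasi}, makes post-composition compatible with concatenation of representatives) every bonding map $(p_{\lambda\lambda'})_*$ and every leg $(p_\lambda)_*$ of the cone is a continuous group homomorphism. Since the forgetful functor from topological groups to Top creates inverse limits, the space $\varprojlim_\lambda\check{\pi}_k^{*^{top}}(X_\lambda,x_\lambda)$, equipped with the pointwise operations and the subspace topology from the product, is a topological group and is the inverse limit of the system in the category of topological groups. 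As $\mathbf{p}_*$ is simultaneously a homeomorphism onto this space and (componentwise) a group homomorphism, it is an isomorphism of topological groups, so $\check{\pi}_k^{*^{top}}(X,x)\cong\varprojlim_\lambda\check{\pi}_k^{*^{top}}(X_\lambda,x_\lambda)$ as topological groups, for every $k\in\mathbb{N}$.

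Finally, since each $X_\lambda$ is an object of HPol, Corollary \ref{Dis} gives that $\check{\pi}_k^{*^{top}}(X_\lambda,x_\lambda)=Sh^*((S^k,*),(X_\lambda,x_\lambda))$ is discrete, hence a discrete (in particular Hausdorff) topological group. Therefore $\check{\pi}_k^{*^{top}}(X,x)$ is an inverse limit of discrete topological groups, i.e.\ prodiscrete, and, being homeomorphic to a subspace of a product of Hausdorff spaces, it is Hausdorff. I do not expect a genuine obstacle here; the step deserving attention is the middle one, where one must confirm that the limit furnished by Theorem \ref{inv} in Top carries the expected group structure and realizes the inverse limit in the topological-group category — which reduces to checking that the structure maps of the system are homomorphisms.
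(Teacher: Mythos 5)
Your argument is correct and is essentially the paper's own: the corollary is stated with its justification inline, namely apply Theorem \ref{inv} to $Sh^*((S^k,*),(X,x))$ to realize $\check{\pi}_k^{*^{top}}(X,x)$ as $\varprojlim_\lambda \check{\pi}_k^{*^{top}}(X_\lambda,x_\lambda)$, and invoke discreteness of each term (Corollary \ref{Dis}) to conclude prodiscreteness and Hausdorffness. Your extra care in upgrading the limit from Top to topological groups via Theorem \ref{quasi} and the continuity of the bonding homomorphisms is a detail the paper leaves implicit, but it is the same route.
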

\begin{corollary}
Let  $(X,x)=\displaystyle{\lim_{\leftarrow}(X_i,x_i)}$, where $X_i$'s are compact polyhedra. Then for all $k\in \Bbb{N}$,
\[\check{\pi}_k^{*^{top}}(X,x)\cong\displaystyle{\lim_{\leftarrow}\check{\pi}_k^{*^{top}}(X_i,x_i)}.\]
\end{corollary}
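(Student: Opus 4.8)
The plan is to derive this immediately from Corollary~\ref{Bra}, which already asserts $\check\pi_k^{*^{top}}(X,x)\cong\lim_{\leftarrow}\check\pi_k^{*^{top}}(X_\lambda,x_\lambda)$ as topological groups for \emph{any} HPol-expansion $\mathbf p\colon X\to\mathbf X=(X_\lambda,p_{\lambda\lambda'},\Lambda)$. So the whole point is to recognize the inverse system $\mathbf X=(X_i,p_{ij},I)$ of (pointed) compact polyhedra defining $(X,x)=\lim_{\leftarrow}(X_i,x_i)$, together with its limit projections $p_i\colon(X,x)\to(X_i,x_i)$, as such an expansion.

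First I would recall the classical resolution theorem from the foundations of shape theory (see \cite[Ch.~I]{MS}): the limit of an inverse system of compact Hausdorff spaces is a resolution of the limit space, and a resolution all of whose terms are ANRs --- compact polyhedra are compact metric ANRs, and in fact are themselves objects of HPol --- induces, after passing to the homotopy category, an HPol-expansion; the analogous statement holds verbatim for pointed spaces. If the index set $I$ causes any discomfort one may first replace it by a cofinite directed set, which changes neither $(X,x)$ nor, by Corollary~\ref{C}, the homeomorphism type of the relevant spaces of coarse shape morphisms. Thus the limit projections give an HPol-expansion $\mathbf p\colon X\to\mathbf X$ of $X$ (respecting base points).

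With this in hand, Corollary~\ref{Bra} applies directly to $\mathbf p$: by definition $Sh^*((S^k,*),(X_i,x_i))=\check\pi_k^{*^{top}}(X_i,x_i)$ and $Sh^*((S^k,*),(X,x))=\check\pi_k^{*^{top}}(X,x)$; by Theorem~\ref{inv} the morphism $\mathbf p_*$ realizes $\check\pi_k^{*^{top}}(X,x)$ as the inverse limit in Top of the system $\big(\check\pi_k^{*^{top}}(X_i,x_i),(p_{ij})_*,I\big)$; and by Theorem~\ref{quasi} and Corollary~\ref{C} every term is a topological group and every bonding map a continuous homomorphism, so this limit is at the same time the limit in the category of topological groups. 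Hence $\check\pi_k^{*^{top}}(X,x)\cong\lim_{\leftarrow}\check\pi_k^{*^{top}}(X_i,x_i)$ as topological groups, which is the assertion; since each $\check\pi_k^{*^{top}}(X_i,x_i)$ is discrete by Corollary~\ref{Dis}, the limit is moreover prodiscrete and Hausdorff. The only step that is not purely formal is the first one --- invoking the resolution/expansion theorem for inverse limits of compact (pointed) polyhedra; everything after that is a verbatim application of Theorem~\ref{inv}/Corollary~\ref{Bra}, so I expect no genuine obstacle.
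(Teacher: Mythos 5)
Your proposal is correct and follows essentially the same route as the paper, whose proof simply defers to the analogous Corollary~3.8 of \cite{NG}: the only substantive step is recognizing the limit projections of an inverse system of compact polyhedra as an HPol-expansion (via the Marde\v{s}i\'{c}--Segal resolution/expansion theorem), after which Theorem~\ref{inv} and Corollary~\ref{Bra} give the conclusion verbatim.
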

\begin{proof}
It can be proved similar to  the Corollary 3.8 in \cite{NG}.
%Since $X_i$'s are compact, $\prod_{i\in \Bbb{N}}X_i$ is compact by \cite[Theorem 3.2.4]{E} and since $X_i$'s are Hausdorff, $X=\displaystyle{\lim_{\leftarrow}X_i}$ is a closed subspace of $\prod_{i\in \Bbb{N}}X_i$ by \cite[Proposition 2.5.1]{E}. Hence $\displaystyle{\lim_{\leftarrow}X_i}$ is compact by \cite[Theorem 3.1.2]{E}. Therefore, the limit $\mathbf{p}:X\rightarrow (X_i, p_{ii+1},\Bbb{N})$ is an HPol-expansion of $X$ by \cite[Remark 1]{FZ} and so the result holds by Theorem \ref{inv}.
\end{proof}
\begin{corollary}
Let $\mathbf{p}:(X,x)\rightarrow (\mathbf{X},\mathbf{x})=((X_{\lambda},x_{\lambda}),p_{\lambda\lambda'},\Lambda)$ be an HPol$_*$-expansion of a pointed topological space $(X,x)$. Then the following statements hold for all $k\in \Bbb{N}$:\\
(i) If the cardinal number of $\Lambda$ is $\aleph_0$ and $\check{\pi}_k^{*^{top}}(X_{\lambda},x_{\lambda})$ is second countable for every $\lambda\in \Lambda$, then $\check{\pi}_k^{*^{top}}(X,x)$ is second countable.\\
(ii) If $\check{\pi}_k^{*^{top}}(X_{\lambda},x_{\lambda})$ is totally disconnected for every $\lambda\in \Lambda$, then so is $\check{\pi}_k^{*^{top}}(X,x)$.
\end{corollary}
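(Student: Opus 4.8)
The plan is to reduce both statements to the inverse limit description already established. By Corollary \ref{Bra} (which rests on Theorem \ref{inv}) there is an isomorphism of topological groups
\[\check{\pi}_k^{*^{top}}(X,x)\cong\lim_{\leftarrow}\check{\pi}_k^{*^{top}}(X_{\lambda},x_{\lambda}),\]
and, by the construction of inverse limits in Top recorded in Theorem \ref{inv}, the right-hand side carries exactly the subspace topology it inherits as a subset of the product $\prod_{\lambda\in\Lambda}\check{\pi}_k^{*^{top}}(X_{\lambda},x_{\lambda})$. So it suffices to check that a subspace of such a product has the two properties in question under the stated hypotheses.

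For (i), if $|\Lambda|=\aleph_0$ and each $\check{\pi}_k^{*^{top}}(X_{\lambda},x_{\lambda})$ is second countable, then the countable product $\prod_{\lambda\in\Lambda}\check{\pi}_k^{*^{top}}(X_{\lambda},x_{\lambda})$ is second countable, since a countable product of second countable spaces is second countable; second countability is hereditary, so the inverse limit, and hence $\check{\pi}_k^{*^{top}}(X,x)$, is second countable. For (ii), if each $\check{\pi}_k^{*^{top}}(X_{\lambda},x_{\lambda})$ is totally disconnected, then an arbitrary product of totally disconnected spaces is totally disconnected and total disconnectedness passes to subspaces, so again the inverse limit, and therefore $\check{\pi}_k^{*^{top}}(X,x)$, is totally disconnected. (It is worth remarking that the hypothesis in (ii) is automatic: by Corollary \ref{Dis}, $\check{\pi}_k^{*^{top}}(X_{\lambda},x_{\lambda})=Sh^*((S^k,*),(X_{\lambda},x_{\lambda}))$ is discrete because $X_{\lambda}$ is a polyhedron.)

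I do not expect any real obstacle here. The only point that needs to be made carefully is that the inverse limit topology on $\lim_{\leftarrow}\check{\pi}_k^{*^{top}}(X_{\lambda},x_{\lambda})$ agrees with the subspace topology coming from the product — and this is precisely the content of Theorem \ref{inv} — after which the conclusion follows from the standard fact that second countability is productive over countable index sets and hereditary, while total disconnectedness is productive over arbitrary index sets and hereditary. Both parts can thus be disposed of in a few lines.
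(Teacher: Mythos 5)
Your proposal is correct and follows essentially the same route as the paper: the authors likewise invoke the inverse-limit description of $\check{\pi}_k^{*^{top}}(X,x)$ as a subspace of the product and conclude from the facts that second countability and total disconnectedness are preserved by (countable, resp. arbitrary) products and by subspaces. Your added observation that the hypothesis in (ii) is automatic because each $\check{\pi}_k^{*^{top}}(X_{\lambda},x_{\lambda})$ is discrete is a correct bonus not present in the paper.
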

% A map $f:X\rightarrow Y$ is called a {\it bi-quotient map}, if for each $y\in Y$ every collection of open subsets of $X$ that covers $f^{-1}(y)$ has a finite subcollection whose images cover some neighborhood of $y$.
%  Indeed, if $Y$ is discrete, then any map $f:X\rightarrow Y$ is a bi-quotient map. Moreover,  any product of bi-quotient maps is a bi-quotient map and if $f$ is a bi-quotient map and $X$ is second countable, then $Y$ is also second countable (see \cite{ME}).
\begin{proof}
The results follow from the fact that the product and the subspace topologies preserve the properties of being  second countable and totally disconnected.
\end{proof}
\begin{remark}
The authors proved a similar result to the  above corollary for shape homotopy groups \cite[Corollary 3.9]{NG}. Note that in that case, we can omit the assumption of second coutability  of $\pi_{k}^{qtop}(X_{\lambda},x_{\lambda})$, for all $\lambda\in \Lambda$. Indeed, 
 If $X$ is a polyhedron, so $X$ is second countable and hence $\Omega^{k}(X,x)$ is second countable, for all $x\in X$ and $k\in \mathbb{N}$ (see \cite{D}). Since $\pi_{k}^{qtop}(X,x)$ is discrete, then the map $q:\Omega^{k}(X,x)\rightarrow \pi_{k}^{qtop}(X,x)$ is a bi-quotient map and therefore $\pi_{k}^{qtop}(X,x)$ is also second countable, for all $k\in \mathbb{N}$ (see \cite{ME}).
 \end{remark}
Let $X$ be a topological space and let $x_{0},x_{1}\in X$. A coarse shape path in $X$ from $x_{0}$ to $x_{1}$ is a bi-pointed coarse shape morphism $\Omega^{*}:(I,0,1)\rightarrow (X,x_{0},x_{1})$. $X$ is said to be coarse shape path connected, if for every pair $x,x^{\prime}\in X$, there is a coarse shape path from $x$ to $x^{\prime}$. If $X$ is a coarse shape path connected space, then $\check{\pi}_k^{*}(X,x)\cong\check{\pi}_k^{*}(X,x^{\prime})$, for any two points $x,x^{\prime}\in X$ and every $k\in \mathbb{N}$ \cite[Corollary 1]{B}. 

Now, we show that these two groups are isomorphic as topological groups, if $X$ is a coarse shape path connected, paracompact and locally compact space.
\begin{theorem}
Let $X$ be a coarse shape path connected, paracompact and locally compact space. Then  $\check{\pi}_k^{*^{top}}(X,x)\cong\check{\pi}_k^{*^{top}}(X,x^{\prime})$, for every pair $x,x^{\prime}\in X$ and all $k\in \mathbb{N}$.
\end{theorem}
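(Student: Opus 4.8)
## Proof Proposal

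\textbf{The plan} is to promote the known abstract isomorphism $\check{\pi}_k^{*}(X,x)\cong\check{\pi}_k^{*}(X,x^{\prime})$ of \cite[Corollary 1]{B} to a homeomorphism, by showing that the isomorphism and its inverse are both continuous with respect to the topology $T_{\mathbf p}$ introduced in Proposition \ref{To}. The isomorphism is realized by a coarse shape path $\Omega^{*}:(I,0,1)\to(X,x_0,x_1)$: conjugation (or rather, change of basepoint) along $\Omega^{*}$ carries a coarse shape morphism $F^*:(S^k,*)\to(X,x)$ to another one based at $x^{\prime}$. Since this construction is built from composition of coarse shape morphisms, the natural tool is Corollary \ref{C}(i), which already gives continuity of the maps $F^*_1$ and $F^*_2$ induced by composing with a fixed coarse shape morphism. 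So the first step is to write the basepoint-change isomorphism $\Phi_{\Omega^*}:\check{\pi}_k^{*^{top}}(X,x)\to\check{\pi}_k^{*^{top}}(X,x^{\prime})$ explicitly at the level of an HPol$_*$-expansion $\mathbf p:(X,x)\to(\mathbf X,\mathbf x)$ and an HPol$_*$-expansion $\mathbf p':(X,x^{\prime})\to(\mathbf X',\mathbf x^{\prime})$ — by Proposition \ref{To} the topology is independent of the expansion, so we may choose convenient ones.

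\textbf{The role of the hypotheses.} The abstract change-of-basepoint argument in \cite{B} works for any coarse shape path connected $X$; the extra hypotheses (paracompact and locally compact) must be there to guarantee that the coarse shape path $\Omega^*$ behaves well with respect to a single compatible pair of expansions, i.e.\ that we can represent $\Omega^*$ by a genuine level $S^*$-morphism over the index set used for $\mathbf p$ and $\mathbf p'$, and that the loop-concatenation operations used to define $\Phi_{\Omega^*}$ at each index $\lambda$ are continuous in the relevant polyhedral homotopy sets. Concretely, I would use that for paracompact $X$ one has HPol-expansions of a particularly manageable form (inverse limits of nerves of locally finite normal covers), and local compactness to pass from the bi-pointed expansion of $(I,0,1)$ to matching basepoint data in $(\mathbf X,\mathbf x)$ and $(\mathbf X',\mathbf x')$. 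The key reduction is: since each $\check{\pi}_k^{*^{top}}(X_\lambda,x_\lambda)$ is discrete (Corollary \ref{Bra}), and $\check{\pi}_k^{*^{top}}(X,x)\cong\varprojlim\check{\pi}_k^{*^{top}}(X_\lambda,x_\lambda)$ as topological groups (Corollary \ref{Bra}, via Theorem \ref{inv}), it suffices to produce, for each index $\lambda$, a basepoint-change homomorphism between the discrete groups $\check{\pi}_k^{*}(X_\lambda,x_\lambda)$ and $\check{\pi}_k^{*}(X'_\lambda,x'_\lambda)$ compatible with the bonding maps, since any map into a discrete group is automatically continuous.

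\textbf{Execution.} First I would set up the two expansions and use coarse shape path connectedness (together with paracompactness/local compactness) to obtain a level representative $(\omega^n_\lambda,\mathrm{id})$ of $\Omega^*$ over a common cofinal index set, giving for each $\lambda$ and each $n$ a path $\omega^n_\lambda$ in $X_\lambda$ from $x_\lambda$ to $x'_\lambda$. Second, define $\Phi_{\Omega^*}$ on representatives by the usual formula $[(f^n_\lambda)]\mapsto[(\,(\omega^n_\lambda)^{-1}\cdot f^n_\lambda\cdot \omega^n_\lambda\,)]$ (with the understanding that changing basepoint of $S^k$-maps for $k\ge 1$ is conjugation, and carefully interpreting the $k=1$ concatenation); verify it is a well-defined group isomorphism — this is the content of \cite[Corollary 1]{B}, so it can be cited rather than redone. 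Third, observe that $\Phi_{\Omega^*}$ respects the bonding maps $(p_{\lambda\lambda'})_*$ and $(p'_{\lambda\lambda'})_*$, hence induces a map of inverse systems $\mathbf{Sh^*}((S^k,*),(X,x))\to\mathbf{Sh^*}((S^k,*),(X,x^{\prime}))$ which is levelwise continuous (trivially, as the terms are discrete); passing to inverse limits via Theorem \ref{inv} yields a continuous group homomorphism $\Phi_{\Omega^*}:\check{\pi}_k^{*^{top}}(X,x)\to\check{\pi}_k^{*^{top}}(X,x^{\prime})$. Running the same argument with the reverse coarse shape path $\bar\Omega^*$ (from $x_1$ to $x_0$) gives a continuous inverse, so $\Phi_{\Omega^*}$ is an isomorphism of topological groups.

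\textbf{The main obstacle} I anticipate is the first step: producing a single level $S^*$-morphism representing $\Omega^*$ that is simultaneously compatible with chosen HPol$_*$-expansions of $(X,x)$ and $(X,x^{\prime})$, and keeping careful track of the basepoint/bi-point bookkeeping for the interval $(I,0,1)$. This is exactly where paracompactness and local compactness are needed (to guarantee suitable expansions and to control the interplay of basepoints), and it is the place where one must be most careful not to conflate the pro-HPol$_*$ and pro$^*$-HPol$_*$ levels. Once that level representative is in hand, everything else is formal: continuity is free because the building-block groups are discrete, and the inverse-limit description from Theorem \ref{inv} and Corollary \ref{Bra} does the rest.
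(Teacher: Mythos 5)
Your proposal reaches the right conclusion but takes a genuinely different, and much heavier, route than the paper. The paper's proof is three lines: it invokes Bilan's Theorem 3 from \cite{B}, which says that if $X$ admits a metrizable polyhedral resolution and there is a coarse shape path from $x$ to $x'$, then $(X,x)$ and $(X,x')$ are already isomorphic \emph{as pointed spaces} in $Sh^*_{\star}$; it then notes that paracompact locally compact spaces admit such resolutions by \cite{B.}; and finally it applies Corollary \ref{A} (isomorphic objects in $Sh^*_\star$ have isomorphic topological coarse shape groups, which is immediate from the continuity of composition in Corollary \ref{C}). In other words, once the pointed spaces themselves are coarse-shape isomorphic, the topological group isomorphism is purely formal and no levelwise construction, no inverse-limit argument, and no discreteness of the terms is needed. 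Your plan instead tries to build the basepoint-change isomorphism by hand at the level of expansions and then deduce continuity from Theorem \ref{inv} and Corollary \ref{Bra}; that continuity argument is fine, and your identification of where paracompactness and local compactness enter is accurate.

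The weakness is that the step you yourself flag as ``the main obstacle'' --- producing a level $S^*$-representative $(\omega^n_\lambda)$ of the coarse shape path that is simultaneously compatible with pointed HPol$_*$-expansions at both basepoints, and verifying that the resulting conjugation maps commute with the bonding morphisms up to the $S^*$-equivalence --- is left entirely unresolved, and it is not a routine bookkeeping matter: it is essentially the substance of Bilan's Theorem 3. For an arbitrary space a bi-pointed expansion of $(I,0,1)\to(X,x,x')$ need not interact well with pointed expansions at $x$ and at $x'$, which is exactly why the metrizable polyhedral resolution hypothesis appears in \cite{B}. So as written the proposal has a gap at its central step; the most economical repair is to replace the whole levelwise construction by the citation the paper uses, namely that $Sh^*(X,x)\cong Sh^*(X,x')$ in $Sh^*_\star$, after which Corollary \ref{A} finishes the argument immediately.
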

\begin{proof}
If  X is a topological space admitting a metrizable polyhedral resolution and  for a pair  $x,x^{\prime}\in X$ there exists a coarse
shape path in X from $x$ to $x^{\prime}$, then $(X,x)$ and $(X,x^{\prime})$ are isomorphic pointed spaces in $Sh^*_{\star}$ (see \cite[Theorem 3]{B}). 
Since coarse shape path connected, paracompact and locally compact spaces satisfy in  the above condition \cite{B.},  $Sh^{*}(X,x)\cong Sh^{*}(X,x^{\prime})$. Hence by Corollary \ref{A} we have $\check{\pi}_k^{*^{top}}(X,x)\cong\check{\pi}_k^{*^{top}}(X,x^{\prime})$, for every pair $x,x^{\prime}\in X$ and all $k\in \mathbb{N}$.
\end{proof}
%%%%%%%%%%%%%%%%%%%%%%%%%%%%%%%%%%%%%%%%%%%%%%%
\section{Main results}
It is well-known that if  the cartesian product of two spaces $X$ and $Y$ admits an HPol-expansion, which is the cartesian product of HPol-expansions of these space, then $X\times Y$ is a product in the shape category (see \cite{M1}). %Keesling
%[4] proved that for compact Hausdorff spaces, $X\times Y$ is a product in shape category.
 In this case, the authors showed that the $k$th topological  shape group commutes with finite products, for all $k\in \mathbb{N}$ \cite[Theorem 4.1]{NG}. 
 
 Also, if $X$ and $Y$ admit HPol-expansions $\mathbf{p}:X\rightarrow \mathbf{X}$ and $\mathbf{q}:Y\rightarrow \mathbf{Y}$,  respectively, such that $\mathbf{p}\times \mathbf{q}: X\times Y\rightarrow \mathbf{X}\times \mathbf{Y}$ is an HPol-expansion, then $X\times Y$ is a product in the coarse shape category \cite[Theorem 2.2]{Na}. 
 Marde\v{s}i\'{c} \cite{M1} proved that if $\mathbf{p}:X\rightarrow \mathbf{X}$ and $\mathbf{q}:Y\rightarrow \mathbf{Y}$ are HPol-expansions of compact Hausdorff spaces $X$ and $Y$, respectively, 
 then  $\mathbf{p}\times \mathbf{q}: X\times Y\rightarrow \mathbf{X}\times \mathbf{Y}$ is an HPol-expansion and so in this case, $X\times Y$ is a product in the coarse shape category. %(see \cite[theorem 12]{M1}).
  
  Now, we show that under the above condition, the $k$th topological coarse shape group  commutes with finite products, for all $k\in \mathbb{N}$. 

\begin{theorem}\label{pro}
If $X$ and $Y$ are coarse shape path connected spaces with HPol-expansions $\mathbf{p}: X\rightarrow \mathbf{X}$ and $\mathbf{q}:Y\rightarrow \mathbf{Y}$   such that $\mathbf{p}\times \mathbf{q}:X\times Y\rightarrow \mathbf{X}\times\mathbf{Y}$ is an HPol-expansion, then $\check{\pi}_{k}^{*^{top}}(X\times Y)\cong \check{\pi}_{k}^{*^{top}}(X)\times \check{\pi}_{k}^{*^{top}}(Y)$, for all $k\in \mathbb{N}$.
\end{theorem}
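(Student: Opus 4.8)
The plan is to reduce the statement to a limit computation, using the structural results already established. First I would invoke the hypothesis that $\mathbf{p}\times\mathbf{q}:X\times Y\rightarrow\mathbf{X}\times\mathbf{Y}$ is an HPol-expansion, so that if $\mathbf{p}:X\rightarrow(X_\lambda,p_{\lambda\lambda'},\Lambda)$ and $\mathbf{q}:Y\rightarrow(Y_\mu,q_{\mu\mu'},M)$, then $(X_\lambda\times Y_\mu,\ p_{\lambda\lambda'}\times q_{\mu\mu'},\ \Lambda\times M)$ is an HPol-expansion of $X\times Y$ (after choosing compatible base points, so that we work in HPol$_*$). By Corollary \ref{Bra} (which rests on Theorem \ref{inv}), we then have topological group isomorphisms
\[
\check{\pi}_k^{*^{top}}(X\times Y)\cong\varprojlim_{(\lambda,\mu)}\check{\pi}_k^{*^{top}}(X_\lambda\times Y_\mu),\qquad
\check{\pi}_k^{*^{top}}(X)\times\check{\pi}_k^{*^{top}}(Y)\cong\Big(\varprojlim_\lambda\check{\pi}_k^{*^{top}}(X_\lambda)\Big)\times\Big(\varprojlim_\mu\check{\pi}_k^{*^{top}}(Y_\mu)\Big).
\]
Since inverse limits commute with finite products, the right-hand side is $\varprojlim_{(\lambda,\mu)}\big(\check{\pi}_k^{*^{top}}(X_\lambda)\times\check{\pi}_k^{*^{top}}(Y_\mu)\big)$, so it suffices to establish the isomorphism at the polyhedral level, compatibly with the bonding maps.

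Next I would handle the polyhedral case: for compact polyhedra (or CW pairs) $P$, $Q$ one has the classical isomorphism $\pi_k(P\times Q,(p_0,q_0))\cong\pi_k(P,p_0)\times\pi_k(Q,q_0)$ induced by the two projections, and since $\check{\pi}_k^{*^{top}}$ on polyhedra is just the ordinary homotopy group with the discrete topology (Corollary \ref{Dis}), this is automatically a topological group isomorphism. The content to check is that these isomorphisms are natural in $(\lambda,\mu)$, i.e. they commute with the maps $(p_{\lambda\lambda'}\times q_{\mu\mu'})_*$ and $(p_{\lambda\lambda'})_*\times(q_{\mu\mu'})_*$; this is immediate from functoriality of $\pi_k$ and the fact that the projections $X_\lambda\times Y_\mu\rightarrow X_\lambda$, $X_\lambda\times Y_\mu\rightarrow Y_\mu$ commute with the bonding maps. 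Passing to the inverse limit of a natural family of isomorphisms of topological groups then yields a topological group isomorphism between the two limits, and chasing through the identifications above gives $\check{\pi}_k^{*^{top}}(X\times Y)\cong\check{\pi}_k^{*^{top}}(X)\times\check{\pi}_k^{*^{top}}(Y)$.

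The one genuine subtlety — and the step I expect to be the main obstacle — is bookkeeping with base points and with the \emph{coarse} (pro$^*$) structure rather than the plain pro-structure. One must check that the coarse shape group of $X\times Y$, defined via sequences $(f^n_{(\lambda,\mu)})$ of maps into $X_\lambda\times Y_\mu$, decomposes coordinatewise as a pair of sequences into $X_\lambda$ and into $Y_\mu$, and that the $S^*$-morphism equivalence relation and the coarse-shape binary operation (concatenation of loops, performed levelwise and in each $n$) respect this decomposition; this is where one uses that the operation in $\check{\pi}_k^*$ is induced levelwise from the operation in each $\check{\pi}_k$, exactly as recalled in Section \ref{S}. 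The coarse shape path connectedness hypothesis is used only to make the base-point choices harmless (so that $\check{\pi}_k^{*^{top}}(X\times Y)$, $\check{\pi}_k^{*^{top}}(X)$, $\check{\pi}_k^{*^{top}}(Y)$ are well-defined up to isomorphism independently of the chosen base points, via \cite[Corollary 1]{B}). Once the levelwise decomposition is in place, continuity in both directions is free: on the polyhedral pieces everything is discrete, and the limit/product topologies transport the homeomorphism automatically, exactly as in the proof of \cite[Theorem 4.1]{NG} for the shape case.
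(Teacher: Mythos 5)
Your route is genuinely different from the paper's. The paper does not pass to inverse limits at all: it uses the hypothesis only to invoke \cite[Theorem 2.2]{Na}, so that $X\times Y$ is a product in $Sh^*$, defines $\phi$ via the projection-induced maps (continuous by Corollary \ref{C}) and $\psi(F^*,G^*)=\lfloor F^*,G^*\rfloor$ via the universal property, cites \cite[Theorem 2.4]{Na} for $\phi\circ\psi=\psi\circ\phi=\mathrm{id}$, and then verifies continuity of $\psi$ directly on basic open sets by showing $\psi(V^{F^*}_{\lambda}\times V^{G^*}_{\mu})\subseteq V^{\lfloor F^*,G^*\rfloor}_{\lambda\mu}$. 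Your reduction through Theorem \ref{inv}/Corollary \ref{Bra} and the commutation of inverse limits with finite products is a legitimate alternative and arguably more systematic, but it shifts all the work onto the levelwise isomorphism, and that is where you have a concrete error.

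The error is the claim that ``$\check{\pi}_k^{*^{top}}$ on polyhedra is just the ordinary homotopy group with the discrete topology (Corollary \ref{Dis}).'' Corollary \ref{Dis} gives only discreteness of the topology; it does not identify the group. For a polyhedron $P$ the rudimentary system $(P)$ is an HPol-expansion, so $\check{\pi}_k^{*}(P,p_0)$ is the group of sequences $(f^n)$ in $\pi_k(P,p_0)$ modulo eventual equality, which is in general vastly larger than $\pi_k(P,p_0)$ --- the paper's own example with $card(Sh^*(P,Q))=2^{\aleph_0}$ for finite polyhedra makes this point. Consequently the ``classical isomorphism $\pi_k(P\times Q)\cong\pi_k(P)\times\pi_k(Q)$'' is not the isomorphism your argument needs; you need $\check{\pi}_k^{*}(X_\lambda\times Y_\mu)\cong\check{\pi}_k^{*}(X_\lambda)\times\check{\pi}_k^{*}(Y_\mu)$, naturally in $(\lambda,\mu)$. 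This \emph{is} true, and the reason is the one you gesture at in your last paragraph: a sequence of maps into $X_\lambda\times Y_\mu$ decomposes coordinatewise, and ``eventually ($f^n\simeq f'^n$ and $g^n\simeq g'^n$)'' is equivalent to the conjunction of the two eventual conditions separately, so the $S^*$-equivalence relation and the levelwise group operation respect the splitting. So your proof is repairable --- indeed your third paragraph essentially contains the repair --- but as written the key levelwise step rests on a false identification rather than on this sequence argument, and it should be rewritten accordingly (together with the routine fix of base points so that $\mathbf{p}\times\mathbf{q}$ is used as an HPol$_*$-expansion).
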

\begin{proof}
Let $\mathcal{S}^*(\pi_{X}):X\times Y\rightarrow X$ and $\mathcal{S}^*(\pi_{Y}):X\times Y\rightarrow Y$ be the induced coarse shape morphisms of canonical projections and assume that $\phi_{X}:\check{\pi}_{k}^{*^{top}}(X\times Y)\rightarrow \check{\pi}_{k}^{*^{top}}(X)$ and $\phi_{Y}:\check{\pi}_{k}^{*^{top}}(X\times Y)\rightarrow \check{\pi}_{k}^{*^{top}}(Y)$ are the induced continuous homomorphisms by $\mathcal{S}^*(\pi_{X})$  and $\mathcal{S}^*(\pi_{Y})$, respectively. Then the induced homomorphism $\phi: \check{\pi}_{k}^{*^{top}}(X\times Y)\rightarrow \check{\pi}_{k}^{*^{top}}(X)\times \check{\pi}_{k}^{*^{top}}(Y)$ is continuous. Since $X\times Y$ is a product in Sh$^*$, we can define a homomorphism $\psi:\check{\pi}_{k}^{*^{top}}(X)\times \check{\pi}_{k}^{*^{top}}(Y)\rightarrow \check{\pi}_{k}^{*^{top}}(X\times Y)$ by $\psi(F^*,G^*)=\lfloor F^*,G^*\rfloor$, where $\lfloor F^*,G^*\rfloor:S^{k}\rightarrow X\times Y$ is a unique  coarse shape morphism with $\mathcal{S}^*(\pi_{X})(\lfloor F^*,G^*\rfloor)=F^*$ and $\mathcal{S}^*(\pi_{Y})(\lfloor F^*,G^*\rfloor)=G^*$. In fact, if $F^*=\langle \mathbf{f}^*=(f,f_{\lambda}^{n})\rangle$ and $G^*=\langle \mathbf{g}^*=(g,g_{\mu}^{n})\rangle$, then $\lfloor F^*,G^*\rfloor=\langle \lfloor \mathbf{f}^*,\mathbf{g}^*\rfloor\rangle$, where $\lfloor \mathbf{f}^*,\mathbf{g}^*\rfloor$ is given by $\lfloor f,g\rfloor_{\lambda\mu}^{n}=f_{\lambda}^{n}\times g_{\mu}^{n}:S^{k}\rightarrow X_{\lambda}\times Y_{\mu}$. By the proof of \cite[Theorem 2.4]{Na}, the homomorphism $\psi$ is well define and moreover, $\phi \circ \psi=id$ and $\psi \circ \phi=id$.

To complete the proof, it is enough to show that $\psi$ is continuous. Let $\lfloor F^*,G^*\rfloor\in V_{\lambda\mu}^{\lfloor F^*,G^*\rfloor}$ be a basis open in the topology on $\check{\pi}_{k}^{*^{top}}(X\times Y)$. Considering open sets $F^*\in V_{\lambda}^{F^*}$ and $G^*\in V_{\mu}^{G^*}$, we show that $\psi(V_{\lambda}^{F^*}\times V_{\mu}^{G^*})\subseteq V_{\lambda\mu}^{\lfloor F^*,G^*\rfloor}$. Let $H^*\in V_{\lambda}^{F^*}$ and $K^*\in V_{\mu}^{G^*}$, then $p_{\lambda}\circ H^*=p_{\lambda}\circ F^*$ and $q_{\mu}\circ K^*=q_{\mu}\circ G^*$. 
%If $H^*=\langle (h,h_{\lambda}^{n})\rangle$ and $K^*=\langle (k,k_{\mu}^{n})\rangle$, then $p_{\lambda}\circ H^*:S^{k}\rightarrow X_{\lambda}$ and $q_{\mu}\circ K^*:S^{k}\rightarrow Y_{\mu}$ are coarse shape morphisms given by $p_{\lambda}\circ h_{\lambda}$ and $q_{\mu}\circ k_{\mu}$, respectively. Therefore $p_{\lambda}\circ H\times q_{\mu}\circ K:S^{k}\rightarrow X_{\lambda}\times Y{\mu}$ is a shape morphism given by $(p_{\lambda}\circ h_{\lambda})\times (q_{\mu}\circ k_{\mu})$. On the other hand, $p_{\lambda}\times q_{\mu}(\lfloor H,K\rfloor):S^{k}\rightarrow X_{\lambda}\times Y_{\mu}$ is a shape morphism given by $(p_{\lambda}\times q_{\mu})\circ(h_{\lambda}\times k_{\mu})=(p_{\lambda}\circ h_{\lambda})\times (q_{\mu}\circ k_{\mu})$. So we have $p_{\lambda}\times q_{\mu}(\lfloor H,K\rfloor)=p_{\lambda}\circ H\times q_{\mu}\circ K$. Hence
%\begin{eqnarray}
%p_{\lambda}\times q_{\mu}(\lfloor H,K\rfloor)&=&p_{\lambda}\circ H\times q_{\mu}\circ K\nonumber\\
%&=&p_{\lambda}\circ F\times q_{\mu}\circ G\nonumber\\
%&=&p_{\lambda}\times q_{\mu}(\lfloor F,G\rfloor)\nonumber
%\end{eqnarray}
By a straight computation, we can conclude that $p_{\lambda}\times q_{\mu}(\lfloor H^*,K^*\rfloor)=p_{\lambda}\times q_{\mu}(\lfloor F^*,G^*\rfloor)$ 
which implies that $\psi(H^*,K^*)=\lfloor H^*,K^*\rfloor\in V_{\lambda\mu}^{\lfloor F^*,G^*\rfloor}$.
\end{proof}

\begin{theorem}\label{re2}
Let $(X,x)$ be a pointed topological space. Then for all $k\in \Bbb{N}$,\\
(i) If $(X,x)\in HPol_*$, then $\check{\pi}_k^{*^{top}}(X,x)$ is discrete.\\
(ii) If $\mathbf{p}:(X,x)\rightarrow (\mathbf{X},\mathbf{x})=((X_{\lambda},x_{\lambda}),p_{\lambda\lambda'},\Lambda)$ is an HPol$_*$-expansion of $(X,x)$ and $\check{\pi}_k^{*^{top}}(X,x)$ is discrete, then $\check{\pi}_k^{*^{top}}(X,x)\leq\check{\pi}_k^{*^{top}}(X_{\lambda},x_{\lambda})$, for some $\lambda\in \Lambda$.
\end{theorem}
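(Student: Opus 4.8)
The plan is to read off (i) directly from Corollary~\ref{Dis} and to prove (ii) by producing an explicit topological embedding through one of the bonding projections $p_\lambda$ of the given expansion.

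For (i): since $(X,x)\in \mathrm{HPol}_*$, we may take the rudimentary system $(X,x)$ itself, with identity projection, as an $\mathrm{HPol}_*$-expansion; then every basic set $V^{F^*}$ is $\{G^*:F^*=G^*\}=\{F^*\}$, so all singletons are open. (Equivalently this is the pointed form of Corollary~\ref{Dis} with $Y=(S^k,*)$, since $\check\pi_k^{*^{top}}(X,x)=Sh^*((S^k,*),(X,x))$.)

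For (ii): write $e^*$ for the neutral element of $\check\pi_k^{*^{top}}(X,x)$, i.e.\ the constant coarse shape morphism $(S^k,*)\to(X,x)$. Since the group is discrete, $\{e^*\}$ is open, hence contains some basic neighbourhood $V^{e^*}_\lambda$ of $e^*$; as $e^*\in V^{e^*}_\lambda\subseteq\{e^*\}$, we get $V^{e^*}_\lambda=\{e^*\}$ for some $\lambda\in\Lambda$. Fix such a $\lambda$. The projection $p_\lambda:(X,x)\to(X_\lambda,x_\lambda)$ induces a coarse shape morphism $\mathcal S^*(p_\lambda)$ and hence, by the corollaries to Theorem~\ref{quasi} (continuity of $F^*_2$ and functoriality of $\check\pi_k^{*^{top}}$), a continuous homomorphism
\[
(p_\lambda)_*:=\mathcal S^*(p_\lambda)_2:\check\pi_k^{*^{top}}(X,x)\longrightarrow\check\pi_k^{*^{top}}(X_\lambda,x_\lambda),\qquad (p_\lambda)_*(G^*)=p_\lambda\circ G^*.
\]
Its kernel consists of those $G^*$ with $p_\lambda\circ G^*$ the neutral element of $\check\pi_k^{*^{top}}(X_\lambda,x_\lambda)$; but that neutral element is precisely $p_\lambda\circ e^*$, so $\ker(p_\lambda)_*=\{G^*:p_\lambda\circ G^*=p_\lambda\circ e^*\}=V^{e^*}_\lambda=\{e^*\}$, and $(p_\lambda)_*$ is injective.

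It remains to upgrade injectivity to a topological embedding. By part (i), $\check\pi_k^{*^{top}}(X_\lambda,x_\lambda)$ is discrete, so its subset $(p_\lambda)_*\big(\check\pi_k^{*^{top}}(X,x)\big)$ is discrete in the subspace topology; since a continuous bijection between discrete spaces is a homeomorphism, $(p_\lambda)_*$ maps $\check\pi_k^{*^{top}}(X,x)$ isomorphically and homeomorphically onto a subgroup of $\check\pi_k^{*^{top}}(X_\lambda,x_\lambda)$, which is exactly the claim $\check\pi_k^{*^{top}}(X,x)\leq\check\pi_k^{*^{top}}(X_\lambda,x_\lambda)$. The only step that genuinely needs care is the identification $\ker(p_\lambda)_*=V^{e^*}_\lambda$: it uses that $(p_\lambda)_*$ is a homomorphism (so its kernel is a subgroup) and that $p_\lambda\circ e^*$ is the neutral element of the target; everything else is routine given the preceding results.
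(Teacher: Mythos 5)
Your proof is correct and follows essentially the same route as the paper: part (i) via the discreteness of $Sh^*(Y,X)$ for polyhedral $X$ (Corollary~\ref{Dis}), and part (ii) by extracting a basic neighbourhood $V^{E^*_x}_{\lambda}=\{E^*_x\}$ from the open singleton at the identity and identifying it with $\ker(p_{\lambda})_*$ to get injectivity. Your closing observation that the injection is moreover a topological embedding (both groups being discrete) is a small refinement the paper does not spell out, but the core argument is the same.
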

\begin{proof}
(i) This follows from Corollary \ref{Dis}.\\
(ii) Since $\check{\pi}_k^{*^{top}}(X,x)$ is a discrete group,  $\{E_x^*\}$ is an open set of identity point of $\check{\pi}_k^{*^{top}}(X,x)$. Thus $\{E_x^*\}=\cup_{\lambda\in \Lambda_0} V_{\lambda}^{F^*}$, where $\Lambda_0\subseteq\Lambda$. Consider the induced homomorphism ${p_{\lambda}}_*:\check{\pi}_k^{*^{top}}(X,x)\rightarrow \check{\pi}_k^{*^{top}}(X_{\lambda},x_{\lambda})$ given by ${p_{\lambda}}_*(F^*)=p_{\lambda}\circ F^*$. Let $G^*\in ker {p_{\lambda}}_*$, i.e., $p_{\lambda}\circ G^*=E_{x_{\lambda}}^*=p_{\lambda}\circ E_x^*$. Thus $G^*\in V^{E_x^*}_{\lambda}\subseteq \cup_{\lambda\in \Lambda_0} V_{\lambda}^{F^*}=\{E_x^*\}$ and so $G^*=E_x^*$. Therefore ${p_{\lambda}}_*$ is injective, for all $\lambda\in \Lambda_0$ and $k\in\Bbb{N}$.
\end{proof}
%%%%%%%%%%%%%%%%%%%%%%%%%%%%%%%%%%%%%%%%
Recall that an inverse system $\mathbf{X}=(X_{\lambda},p_{\lambda\lambda'},\Lambda)$ of pro-HTop is said to be movable if every $\lambda\in \Lambda$ admits a $\lambda'\geq \lambda$ such that each
$\lambda''\geq \lambda$ admits a morphism $r:X_{\lambda'}\rightarrow X_{\lambda''}$ of HTop with $p_{\lambda\lambda''}\circ r\simeq p_{\lambda\lambda'}$. We say that a topological space $X$ is movable  provided that it admits an HPol-expansion $\mathbf{p}:X\rightarrow \mathbf{X}$ such that $\mathbf{X}$ is a movable inverse system of pro-HPol \cite{MS}. We know that  under some conditions, movability can be transferred from a pointed topological space $(X,x)$ to $\check{\pi}_k^{top}(X,x)$ (see \cite{NG}) and  now we show that it can be transferred to $\check{\pi}_k^{*^{top}}(X,x)$ too. 

%In the following, we show that movability can be transferred from a pointed topological spaces $(X,x)$ to $\check{\pi}_k^{top}(X,x)$ under some condition. First we need the following results.
\begin{lemma}\label{mov}
If $(\mathbf{X},\mathbf{x})=((X_{\lambda},x_{\lambda}),p_{\lambda\lambda'},\Lambda)$ is a movable (uniformly movable) inverse system, then  $\mathbf{Sh^*}((S^k,*),(X,x))= (Sh^*((S^k,*),(X_{\lambda},x_{\lambda})), (p_{\lambda\lambda'})_*,\Lambda)$  is also a movable (uniformly movable) inverse system, for all $k\in \Bbb{N}$.
\end{lemma}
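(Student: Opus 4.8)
The plan is to translate the movability of the inverse system $(\mathbf{X},\mathbf{x})$ directly into movability of $\mathbf{Sh^*}((S^k,*),(X,x))$ by applying the contravariant–covariant functor $\check{\pi}_k^{*}(-)$, or rather its unreduced analogue $Sh^*((S^k,*),-)$, to the bonding morphisms. First I would fix $\lambda\in\Lambda$; by movability of $(\mathbf{X},\mathbf{x})$ there is $\lambda'\geq\lambda$ so that every $\lambda''\geq\lambda$ carries a morphism $r\colon (X_{\lambda'},x_{\lambda'})\to (X_{\lambda''},x_{\lambda''})$ in HPol$_*$ with $p_{\lambda\lambda''}\circ r\simeq p_{\lambda\lambda'}$ rel $\{*\}$. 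Then I claim that this same $\lambda'$ witnesses movability of $\mathbf{Sh^*}((S^k,*),(X,x))$: given $\lambda''\geq\lambda$, define $r_*\colon Sh^*((S^k,*),(X_{\lambda'},x_{\lambda'}))\to Sh^*((S^k,*),(X_{\lambda''},x_{\lambda''}))$ by post-composition, $r_*(F^*)=\mathcal S^*(r)\circ F^*$ (here $\mathcal S^*(r)$ is the coarse shape morphism induced by $r$ as in Remark \ref{Sh}; since $r$ is a map of polyhedra this is just the level morphism determined by $r$ in pro$^*$-HPol). Functoriality of composition of coarse shape morphisms gives $(p_{\lambda\lambda''})_*\circ r_* = (\mathcal S^*(p_{\lambda\lambda''})\circ\mathcal S^*(r))\circ(-) = \mathcal S^*(p_{\lambda\lambda''}\circ r)\circ(-)$, and since $p_{\lambda\lambda''}\circ r\simeq p_{\lambda\lambda'}$ we get $\mathcal S^*(p_{\lambda\lambda''}\circ r)=\mathcal S^*(p_{\lambda\lambda'})$, hence $(p_{\lambda\lambda''})_*\circ r_* = (p_{\lambda\lambda'})_*$ exactly (not merely up to homotopy, because these are maps of discrete spaces / group homomorphisms between discrete groups). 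This is precisely the defining condition for movability of the inverse system $\mathbf{Sh^*}((S^k,*),(X,x))$.

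The uniformly movable case is handled in parallel: recall $\mathbf{X}$ is uniformly movable if every $\lambda$ admits $\lambda'\geq\lambda$ together with a single morphism $\mathbf{r}\colon X_{\lambda'}\to\mathbf{X}$ in pro-HPol (a compatible family $r_{\lambda''}\colon X_{\lambda'}\to X_{\lambda''}$ for all $\lambda''\geq\lambda$) such that $p_{\lambda\lambda''}\circ r_{\lambda''}\simeq p_{\lambda\lambda'}$ for all such $\lambda''$, compatibly with the bonding maps. Applying $Sh^*((S^k,*),-)$ (covariantly, by post-composition) sends this compatible family to a compatible family $(r_{\lambda''})_*$ realizing a morphism from $Sh^*((S^k,*),(X_{\lambda'},x_{\lambda'}))$ into the pro-object $\mathbf{Sh^*}((S^k,*),(X,x))$, and the identities $(p_{\lambda\lambda''})_*\circ(r_{\lambda''})_* = (p_{\lambda\lambda'})_*$ follow from the same functoriality-plus-homotopy-invariance argument, so uniform movability transfers as well.

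The only technical points requiring care, and the place I expect the main (mild) obstacle to sit, are: first, verifying that post-composition $r_*$ is well-defined at the level of pro$^*$-HPol$_*$ equivalence classes and commutes with the bonding morphisms $(p_{\lambda\lambda'})_*$ on the nose — this is a routine check using the definition of composition in pro$^*$-HPol and the description of $\check\pi_k^*$ given just before Theorem \ref{quasi}, but one should be explicit that $S^*$-morphisms out of the rudimentary system $(S^k,*)$ compose strictly; second, upgrading $p_{\lambda\lambda''}\circ r\simeq p_{\lambda\lambda'}$ from a homotopy to an equality after applying the functor, which is exactly the statement that homotopic maps of polyhedra induce the same coarse shape morphism (a consequence of $\mathcal S^*$ factoring through HPol$_*$). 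Neither step is deep; the proof is essentially "apply the hom-functor $Sh^*((S^k,*),-)$ and chase definitions," and since the target spaces $Sh^*((S^k,*),(X_\lambda,x_\lambda))$ are discrete (Corollary \ref{Dis}), there is no topological subtlety in the "morphism of inverse systems" one produces. I would close by remarking that the same argument, using Moszyńska's identification $Sh^*((S^k,*),(X,x))=\check\pi_k^*(X,x)$ and the fact that these maps are group homomorphisms, shows the movable/uniformly movable system is one of abelian groups when $k\geq 2$, which is what is needed for the subsequent results on movability of $\check\pi_k^{*^{top}}(X,x)$.
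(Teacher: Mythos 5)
Your proposal is correct and follows essentially the same route as the paper: fix $\lambda$, take the $\lambda'$ and the maps $r$ furnished by movability of $(\mathbf{X},\mathbf{x})$, and apply the post-composition functor $Sh^*((S^k,*),-)$ to obtain $r_*$ with $(p_{\lambda\lambda''})_*\circ r_*=(p_{\lambda\lambda'})_*$. The paper's proof is just a terser version of this; your added observations (that the homotopy becomes an equality after applying the hom-functor, and the parallel treatment of the uniformly movable case) only make explicit what the paper leaves implicit.
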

\begin{proof}
Let $\lambda\in \Lambda$. Since $(\mathbf{X},\mathbf{x})$ is a movable inverse system, there is a $\lambda'\geq\lambda$ such that for every $\lambda''\geq\lambda$ there is a map $r:(X_{\lambda'},x_{\lambda'})\rightarrow (X_{\lambda''}, x_{\lambda''})$ such that $p_{\lambda\lambda''}\circ r \simeq p_{\lambda\lambda'}$ rel $\{x_{\lambda'}\}$. We consider $r_*: Sh^*((S^k,*),(X_{\lambda'},x_{\lambda'}))\rightarrow Sh^*((S^k,*),(X_{\lambda''},x_{\lambda''}))$. Hence $(p_{\lambda\lambda''})_*\circ r_* \simeq (p_{\lambda\lambda'})_*$ and so $\mathbf{Sh^*}((S^k,*),(X,x))$ is movable.
\end{proof}
%%%%%%%%%%%%%%%%%%%%%%%%%%%%%%%%%%%%%%%%%%%%%
\begin{remark}\label{movre}
Let $(X,x)$ be a movable space. Then there exists an HPol$_*$-expansion $\mathbf{p}:(X,x)\rightarrow (\mathbf{X},\mathbf{x})$ such that $(\mathbf{X},\mathbf{x})$ is a movable inverse system. Suppose $\mathbf{p_*}:Sh^*((S^k,*),(X,x))\rightarrow \mathbf{Sh^*}((S^k,*),(X,x))$ is an HPol$_*$-expansion, then using Lemma \ref{mov}, we can conclude that $\check{\pi}_k^{*^{top}}(X,x)$ is a movable topological group, for all $k\in \Bbb{N}$. 
By Theorem \ref{inv}, if $\mathbf{p}:(X,x)\rightarrow (\mathbf{X},\mathbf{x})$ is an HPol$_*$-expansion of $X$, then $\mathbf{p_*}:Sh^*((S^k,*),(X,x))\rightarrow \mathbf{Sh^*}((S^k,*),(X,x))$ is an inverse limit of $\mathbf{Sh^*}((S^k,*),(X,x))= (Sh^*((S^k,*),(X_{\lambda},x_{\lambda})), (p_{\lambda\lambda'})_*,\Lambda)$. Now, if $Sh^*((S^k,*),(X_{\lambda},x_{\lambda}))$ is a compact  polyhedron for all $\lambda\in \Lambda$, then by \cite[Remark 1]{FZ} $\mathbf{p_*}$ is an HPol$_*$-expansion of  \linebreak $Sh^*((S^k,*),(X,x))$ and therefore in this case, movability of  $(X,x)$ implies movability of  $\check{\pi}_k^{*^{top}}(X,x)$.
\end{remark}
%%%%%%%%%%%%%%%%%%%%%%%%%%%%%%%%%%%%%5555555555555
%the shape homotopy group $\check{\pi}_k^{{top}}(X,x)$ 
\begin{remark}\label{em}
Suppose $(X,x)$ is a topological space and 
 $\mathbf{p}:(X,x)\rightarrow (\mathbf{X},\mathbf{x})=((X_{\lambda},x_{\lambda}),p_{\lambda\lambda'},\Lambda)$ is an HPol$_*$-expansion of $(X,x)$.
Consider $J:\check{\pi}_k^{{top}}(X,x)\longrightarrow \check{\pi}_k^{*^{top}}(X,x)$  given by $J(F=<(f,f_{\lambda})>)=F^*$, where $F^*=<(f,f^n_{\lambda}=f_{\lambda})>$. Then $J$ is an embedding. To prove this, we show that for each $\lambda\in \Lambda$ and for all $F\in \check{\pi}_k^{{top}}(X,x)$, $J(V^F_{\lambda})=V_{\lambda}^{J(F)}\cap J(\check{\pi}_k^{{top}}(X,x))$. Suppose $G=<(g, g_{\lambda})>\in V^F_{\lambda}$, so $p_{\lambda}\circ G=p_{\lambda}\circ F$ or equivalently $g_{\lambda}\simeq f_{\lambda}$. We know that $J(G)=<g_{\lambda}^n=g_{\lambda}>$ and  $J(F)=<f_{\lambda}^n=f_{\lambda}>$. So for all $n\in \Bbb{N}$, $g^n_{\lambda}\simeq f^n_{\lambda}$ and it follows that $p_{\lambda}\circ J(G)=p_{\lambda}\circ J(F)$. Hence $J(G)\in V_{\lambda}^{J(F)}\cap J(\check{\pi}_k^{{top}}(X,x))$. Conversely, suppose that $G^*=<g'^n_{\lambda}>\in V_{\lambda}^{J(F)}\cap J(\check{\pi}_k^{{top}}(X,x))$. Since $G^*\in J(\check{\pi}_k^{{top}}(X,x))$, there exists a $G\in \check{\pi}_k^{{top}}(X,x)$ such that $J(G)=G^*$. If $G=<g_{\lambda}>$, then $J(G)=<g^n_{\lambda}=g_{\lambda}>$. Since $J(G)=G^*$, we can conclude that there is an $n'\in \Bbb{N}$ such that for every $n\geq n'$,  $g'^n_{\lambda}\simeq g_{\lambda}$. On the other hand, we have $p_{\lambda}\circ G^*=p_{\lambda}\circ J(F)$, i.e., there is an $n''\in \Bbb{N}$ such that for every $n\geq n''$,  $g'^n_{\lambda}\simeq f_{\lambda}$. It follows that for every $\lambda\in \Lambda$, $g_{\lambda}\simeq f_{\lambda}$ and hence $p_{\lambda}\circ G=p_{\lambda}\circ F$. Therefore, $G\in V_{\lambda}^F$ and $G^*=J(G)\in J( V_{\lambda}^F)$. Hence $J( V_{\lambda}^F)= V_{\lambda}^{J(F)}\cap J(\check{\pi}_k^{{top}}(X,x))$ which completes the proof.
\end{remark}
Let $(X,x)$ be a topological space. We know that the induced homomorphism $\phi:\pi_k^{qtop}(X,x)\rightarrow\check{\pi}_k^{top}(X,x)$ is continuous, for all $k\in \mathbb{N}$.  Consider the composition $J\circ \phi:\pi_{k}^{qtop}(X,x)\rightarrow \check{\pi}_k^{*^{top}}(X,x)$ in which $J$ is the embedding defined in Remark \ref{em}. If $(X,x)$ is shape injective, then the homomorphism $\phi$ is an embedding and hence we have an embedding from $\pi_{k}^{qtop}(X,x)$ to $\check{\pi}_k^{*^{top}}(X,x)$.\\

Let $X\subseteq Y$  and $r:Y\rightarrow X$ be a retraction. Consider the inclusion map $j:X\rightarrow Y$. It is known that $j_{*}:\check{\pi}_k^{top}(X,x)\rightarrow \check{\pi}_k^{top}(Y,x)$ is a topological embedding \cite[Theorem 4.2]{NG} and similar to the proof of it, we can conclude that the induced map $j_{*}:\check{\pi}_k^{*^{top}}(X,x)\rightarrow \check{\pi}_k^{*^{top}}(Y,x)$ is also a topological embedding. \\

In follow, we present examples whose topological coarse shape homotopoy groups are not discrete. 
\begin{example}\label{HE}
Let $(HE,p=(0,0))=\displaystyle{\lim_{\leftarrow}(X_i,p_i)}$ be the Hawaiian Earring where $X_j=\vee_{i=1}^jS^1_i$. The first shape homotopy group $\check{\pi}_1^{top}(HE,p)$ is not discrete (see \cite[Example 4.5]{NG}. So the above Remark  follows that $\check{\pi}_1^{*^{top}}(HE,p)$ is not discrete.
\end{example}
\begin{example}
Let $k\in \Bbb{N}$  and let $\mathbf{X}=(X_n, p_{nn+1}, \Bbb{N})$, where $X_n=\prod_{j=1}^nS^k_j$ is the product of $n$ copies of $k$-sphere $S^k$, for all $n\in \Bbb{N}$ and the bonding morphisms of $\mathbf{X}$ are the projection maps. Put $X=\displaystyle{\lim_{\leftarrow} X_n}$. Refer to \cite{NG}, $\check{\pi}_k^{top}(X)\cong \displaystyle{\lim_{\leftarrow}\pi_k^{qtop}(X_n)}\cong \prod \Bbb{Z}$ is not discrete. 
 Since $\check{\pi}_{k}^{top}(X)$ is a subspace of $\check{\pi}_{k}^{*^{top}}(X)$ and it is not discete, then $\check{\pi}_{k}^{*^{top}}(X)$ is not discrete. 
\end{example}
%-------------------------------------------------------------------------------------------------------------------------------------------
%\subsection*{Acknowledgements}
%The authors thank the referee for his/her careful reading and useful suggestions.\\
%This research was supported by a grant from Ferdowsi University of Mashhad; (No. MP89152MSH).
%=========================================================================================================================================
%=========================================================================================================================================

%% The Appendices part is started with the command \appendix;
%% appendix sections are then done as normal sections
%% \appendix

%% \section{}
%% \label{}

%% References
%%
%% Following citation commands can be used in the body text:
%% Usage of \cite is as follows:
%%   \cite{key}         ==>>  [#]
%%   \cite[chap. 2]{key} ==>> [#, chap. 2]
%%

%% References with bibTeX database:

%\bibliographystyle{elsarticle-num}
%\bibliography{<your-bib-database>}

%% Authors are advised to submit their bibtex database files. They are
%% requested to list a bibtex style file in the manuscript if they do
%% not want to use elsarticle-num.bst.

%% References without bibTeX database:

% \begin{thebibliography}{00}
\section*{References}

\bibliography{mybibfile}

\end{document}